\theoremstyle{plain}
\newtheorem{theorem}{Theorem}[section]
\newtheorem{lemma}[theorem]{Lemma}
\newtheorem{corollary}[theorem]{Corollary}
\newtheorem{proposition}[theorem]{Proposition}
\theoremstyle{definition}
\newtheorem{definition}[theorem]{Definition}
\newtheorem{example}[theorem]{Example}
\newenvironment{claim}[1]
{\par\addvspace{\medskipamount}\noindent\emph{#1.}\em\ \ignorespaces}
{\normalfont\par\addvspace{\medskipamount}}
\begin{document}

\title{Noncrossing partitions for periodic braids}

\author{Eon-Kyung Lee and Sang-Jin Lee}
\address{Department of Mathematics, Sejong University, Seoul, Korea}
\email{eonkyung@sejong.ac.kr}

\address{Department of Mathematics, Konkuk University, Seoul, Korea; 
School of Mathematics, Korea Institute for Advanced Study, Seoul, Korea}
\email{sangjin@konkuk.ac.kr}
\date{\today}

\begin{abstract}
An element in Artin's braid group $B_n$ is called periodic
if it has a power that lies in the center of $B_n$.
The conjugacy problem for periodic braids can be reduced
to the following: given a divisor $1\leqslant d<n-1$ of $n-1$
and an element $\alpha$ in the super summit set of $\varepsilon^d$,
find $\gamma\in B_n$ such that $\gamma^{-1}\alpha\gamma=\varepsilon^d$,
where $\varepsilon=(\sigma_{n-1}\cdots\sigma_1)\sigma_1$.

In this article we characterize the elements
in the super summit set of $\varepsilon^d$
in the dual Garside structure
by studying the combinatorics of noncrossing partitions
arising from periodic braids.
Our characterization directly provides a conjugating element $\gamma$.
And it determines the size of the super summit set of $\varepsilon^d$
by using the zeta polynomial of the noncrossing partition lattice.

\medskip\noindent
{\em Keywords\/}:
Braid group;
dual Garside structure;
noncrossing partition;
conjugacy problem;
periodic braid.\\
{\em 2010 Mathematics Subject Classification\/}: Primary 20F36; Secondary 37F20\\ 
\end{abstract}

\maketitle

\section{Introduction}

The conjugacy problem in a group has two versions:
the conjugacy decision problem (CDP) is to decide
whether two given elements are conjugate or not;
the conjugacy search problem (CSP) is to find a conjugating element
for a given pair of conjugate elements.
The conjugacy problem is of great interest
for Artin's braid group $B_n$,
which has the well-known presentation~\cite{Art25}:
\begin{equation}\label{eq:ClassPres}
B_n  =  \left\langle \sigma_1,\ldots,\sigma_{n-1} \left|
\begin{array}{ll}
\sigma_i \sigma_j = \sigma_j \sigma_i & \mbox{if } |i-j| \geqslant 2, \\
\sigma_i \sigma_j \sigma_i = \sigma_j \sigma_i \sigma_j & \mbox{if } |i-j| = 1.
\end{array}
\right.\right\rangle.
\end{equation}

The standard solutions to the conjugacy problem
in $B_n$ use Garside structures.
The braid group $B_n$ admits two standard
Garside structures, the {\em classical Garside structure}~\cite{Gar69,EC+92,EM94}
arising from Artin's presentation in~(\ref{eq:ClassPres}),
and the {\em dual Garside structure}~\cite{BKL98}.
Both structures provide efficient solutions to the word problem.

For each element $\alpha\in B_n$,
there is a \emph{super summit set} $[\alpha]^S$,
which depends on the choice of a particular Garside structure.
Every super summit set is finite and nonempty.
Two braids are conjugate
if and only if their super summit sets are the same.
Given a braid, one can compute an element in its super summit set
in polynomial time.
However, super summit sets are exponentially large at least in the braid index.

Let $\Delta = \sigma_1(\sigma_2\sigma_1)
\cdots(\sigma_{n-1}\cdots\sigma_2\sigma_1)$,
$\delta = \sigma_{n-1}\sigma_{n-2}\cdots\sigma_1$ and
$\varepsilon = \delta\sigma_1$.
See Figure~\ref{fig:circ}.
The center of $B_n$ is the cyclic group generated
by $\Delta^2=\delta^n=\varepsilon^{n-1}$.
An element of $B_n$ is called \emph{periodic} if some power of it
lies in the center $\langle\Delta^2\rangle$.
The results of Brouwer~\cite{Bro19}, Ker\'ekj\'art\'o~\cite{Ker19} and
Eilenberg~\cite{Eil34} imply that an $n$-braid is periodic if and only if
it is conjugate to a power of either $\delta$ or $\varepsilon$.

\begin{figure}
\begin{tabular}{ccc}
\includegraphics[scale=.65]{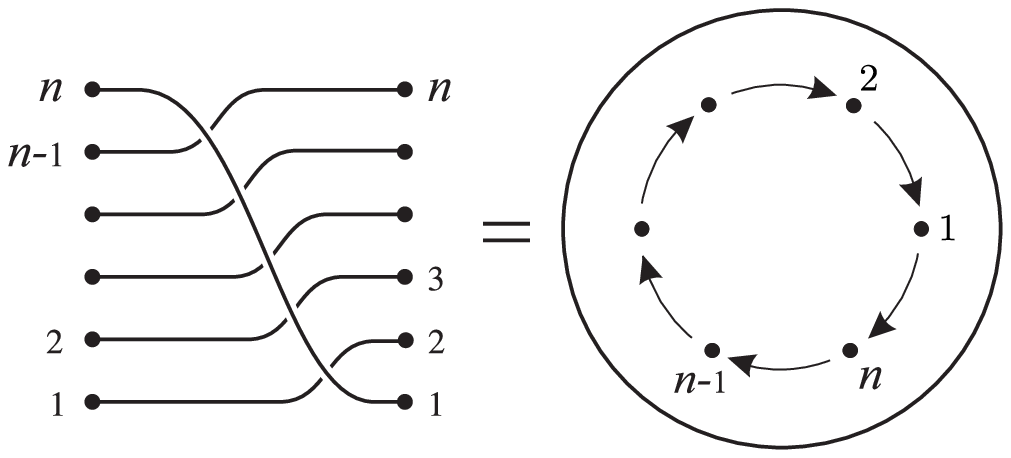}
& &
\includegraphics[scale=.65]{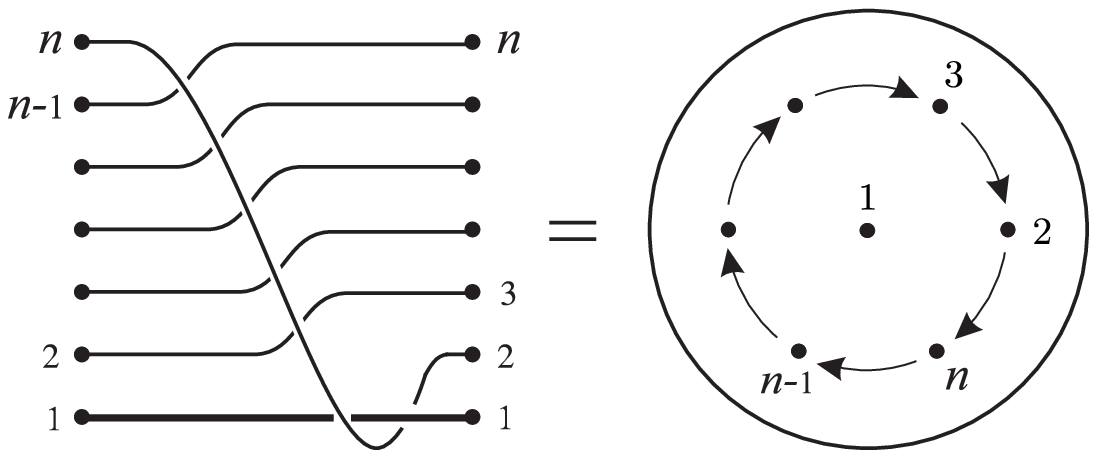}\\
(a) $\delta = \sigma_{n-1}\sigma_{n-2}\cdots\sigma_1$
&\qquad&
(b) $\varepsilon = \delta\sigma_1$
\end{tabular}
\caption{
The braids $\delta$ and $\varepsilon$
correspond to rigid rotations of a punctured disk
when punctures are located as above.}
\label{fig:circ}
\end{figure}

The CDP for periodic braids is easy because an $n$-braid $\alpha$ is periodic if and only if
either $\alpha^n$ or $\alpha^{n-1}$ belongs to $\langle\Delta^2\rangle$
and because two periodic braids are conjugate if and only if
they have the same exponent sum.
The CSP for conjugates of $\delta^k$, $k\in\mathbb{Z}$, is also easy because
$[\delta^k]^S$ is the singleton $\{\delta^k\}$ in the dual Garside structure.
Therefore the main task for solving the conjugacy problem for periodic braids
is to solve the CSP for conjugates of $\varepsilon^k$.

\medskip

The present work is motivated by the paper~\cite{BGG07b} of
Birman, Gebhardt and Gonz\'alez-Meneses,
which provides a polynomial time solution
to the conjugacy problems for periodic braids.
Let us recall their results.

They first obtain a characterization of the elements
in the super summit set $[\varepsilon]^S$
in the \emph{classical} Garside structure.
This characterization is fruitful.
(i) It shows that the size of $[\varepsilon]^S$ is $(n-2)2^{n-3}$.
Because it is exponential in the braid index $n$,
the standard algorithms for the CSP (using the classical Garside structure)
applied to periodic $n$-braids are exponential in $n$.
(ii) It directly provides,
given any element $\alpha$ of $[\varepsilon]^S$,
an element $\gamma\in B_n$ such that $\gamma^{-1}\alpha \gamma=\varepsilon$.
This conjugating element $\gamma$ has canonical length at most 2.
(We remark that the statements in \cite{BGG07b} use ultra summit sets.
For periodic braids, the ultra summit set and the super summit set coincide.)

However, it is hard to extend this method to $\varepsilon^k$ for $k\geqslant 2$.
They overcame this difficulty by using several known isomorphisms
between certain subgroups of the braid groups.
By converting the CSP for conjugates of $\varepsilon^k$ in $B_n$
to the CSP for conjugates of $\delta^k$ in $B_{2n-2}$,
they gave a polynomial time solution to the CSP for conjugates of $\varepsilon^k$.
In this case, a conjugating element from an element $\alpha$
of $[\varepsilon^k]^S$ to $\varepsilon^k$
can be found only in an algorithmic way, not directly from $\alpha$
as in the case of $\varepsilon$.

\medskip

As we have seen above,
a nice characterization of the elements of $[\varepsilon^k]^S$
would give a better understanding of periodic braids.
We ask the following.

\medskip\noindent\textbf{Question.}\ \
Is there a characterization of the elements of $[\varepsilon^k]^S$,
which determines the size of $[\varepsilon^k]^S$
and directly provides a conjugating element
from any element of $[\varepsilon^k]^S$ to $\varepsilon^k$?
\medskip

In this paper, we give an affirmative answer to this question.
We give a characterization of the elements of $[\varepsilon^d]^S$
in the \emph{dual} Garside structure, where $d$ is
a divisor of $n-1$.

\medskip\noindent\textbf{Theorem \ref{thm:main1}.}\ \ \itshape
Let $B_n$ be endowed with the dual Garside structure.
Let $n=rd+1$ for integers $r\geqslant 2$ and $d\geqslant 1$.
Let $\alpha$ be a 1-pure braid in the super summit set of $\varepsilon^d$ in $B_n$.
If $d=1$, then $\alpha=\varepsilon$.
If $d\geqslant 2$, then
$\alpha=\delta^d a$ for a simple element $a$ of the form
$a=a_{d+1,1}b_0 b_1 \cdots b_{r-1}$, where
\begin{enumerate}
\item each $b_k$ is supported on $kd+\{2,3,\ldots,d+1\}$ for $0\leqslant k\leqslant r-1$, and

\item $b_0 \tau^{-d}(b_1)\tau^{-2d}(b_2)
\cdots\tau^{-(r-1)d}(b_{r-1})=[d+1,\ldots,3,2]$.
\end{enumerate}
In this case, $c^{-1}\alpha c=\varepsilon^d$
for a simple element $c$ given by
$$c=\tau^{-d}(b_1b_2\cdots b_{r-1})\tau^{-2d}(b_2b_3\cdots b_{r-1})\cdots
\tau^{-(r-1)d}(b_{r-1}).$$

\medskip
\upshape

In the above theorem, the decomposition $a=a_{d+1,1}b_0 b_1 \cdots b_{r-1}$ is unique,
and is easily obtained from any representation of $a$ in terms of generators of the braid group.

The characterization in the theorem determines the size of $[\varepsilon^d]^S$
as $n Z_d(\frac{n-1}d)=n{n-1\choose d-1}/d$,
where $Z_d(r)={rd\choose d-1}/d$ is the zeta polynomial
of the noncrossing partition lattice of $\{1,\ldots,d\}$.
(See Theorem~\ref{thm:size}.)
The characterization directly provides a conjugating element
from any element of $[\varepsilon^d]^S$ to $\varepsilon^d$.
This conjugating element has canonical length at most 1.

We remark that the condition that $d$ is a divisor of $n-1$
is a mild restriction.
Given any $k\in \mathbb{Z}$, let $d=\gcd(k, n-1)$.
Then $\varepsilon^k$ and $\varepsilon^d$ generate the same cyclic subgroup
in the central quotient $B_n/\langle\Delta^2\rangle$,
hence the CSP for conjugates of $\varepsilon^k$
is equivalent to the CSP for conjugates of $\varepsilon^d$.
(See Theorem~\ref{thm:e^d}.)

We hope that the method developed in this paper will be generalized to other groups
such as Artin groups of finite type and the braid groups
of complex reflection groups.

\section{Preliminaries}
In this section we recall the dual Garside structure
on the braid group $B_n$
and some known results on periodic braids.

\subsection{The dual Garside structure}

Birman, Ko and Lee~\cite{BKL98} introduced the following presentation for $B_n$:
$$
B_n  =  \left\langle a_{ij}, \ 1\leqslant j < i\leqslant n \left|
\begin{array}{ll}
a_{kl}a_{ij}=a_{ij}a_{kl}\quad \mbox{if $(k-i)(k-j)(l-i)(l-j)>0$}, \\
a_{ij}a_{jk}=a_{jk}a_{ik}=a_{ik}a_{ij}\quad \mbox{if $1\leqslant k<j<i \leqslant n$}.
\end{array}
\right.\right\rangle.
$$
The generators $a_{ij}$ are called {\em band generators}.
They are related to the classical generators by
$a_{ij}=\sigma_{i-1}\sigma_{i-2}\cdots\sigma_{j+1}\sigma_j
\sigma_{j+1}^{-1}\cdots\sigma_{i-2}^{-1}\sigma_{i-1}^{-1}$.
The {\em dual Garside structure} on $B_n$ is the triple $(B_n, B_n^+, \delta)$, where
$B_n^+$ is the monoid consisting of the elements
represented by positive words in the band generators, and
$\delta = a_{n,n-1}a_{n-1,n-2}\cdots a_{32} a_{21}$ is
the \emph{Garside element}.
(See \cite{DP99,Deh02} for details of Garside groups.)

There is a partial order $\preccurlyeq$ on $B_n$ defined as follows:
for $\alpha, \beta\in B_n$, $\alpha\preccurlyeq\beta$ if and only if
$\alpha^{-1}\beta\in B_n^+$.
Every pair of elements $\alpha, \beta\in B_n$ admits a unique lcm $\alpha\vee\beta$
and a unique gcd $\alpha\wedge\beta$
with respect to $\preccurlyeq$.
The set $[ 1, \delta ] =\{\alpha\in B_n : 1\preccurlyeq \alpha\preccurlyeq\delta\}$
generates $B_n$.
Its elements are called {\em simple elements}.
The expression $(1,\delta)$ denotes the set $[1,\delta]\setminus\{1,\delta\}$.

\medskip

For $\alpha\in B_n$, there are integers $r\leqslant s$ with
$\delta^r\preccurlyeq \alpha\preccurlyeq\delta^s$.
Hence the invariants
$\inf(\alpha)=\max\{r\in\mathbb{Z} : \delta^r\preccurlyeq \alpha\}$,
$\sup(\alpha)=\min\{s\in\mathbb{Z} : \alpha\preccurlyeq \delta^s\}$ and
$\operatorname{len}(\alpha)=\sup(\alpha)-\inf(\alpha)$
are well-defined. They are called
the \emph{infimum},
\emph{supremum} and
\emph{canonical length} of $\alpha$, respectively.
There exists a unique expression
$$
\alpha =\delta^r a_1\cdots a_\ell
$$
such that $r\in\mathbb{Z}$, $\ell\in\mathbb{Z}_{\geqslant 0}$, $a_1,\ldots,a_\ell\in (1,\delta )$ and
$(a_i a_{i+1})\wedge \delta= a_i$ for $i=1,\ldots,\ell-1$.
It is called the \emph{(left) normal form} of $\alpha$.
In this case, $\inf(\alpha)=r$ and\/ $\sup(\alpha)=r+\ell$,
so $\operatorname{len}(\alpha)=\ell$.

Let $\tau : B_n\to B_n$ be the inner automorphism
defined by $\tau(\alpha)=\delta^{-1}\alpha\delta$ for $\alpha\in B_n$.
Then $\tau(a_{ij})=a_{i+1,j+1}$, where indices are taken modulo $n$
and $a_{ji}=a_{ij}$ for $i>j$.
Hence $\tau$ preserves the monoid
$B_n^+$ and the set $[1,\delta]$ of simple elements.
The \emph{cycling} of $\alpha\in B_n$ is defined as
${\mathbf c}(\alpha) = \delta^r a_2\cdots a_\ell\tau^{-r}(a_1)$.

We denote by $[\alpha ]$ the conjugacy class
$\{\gamma^{-1}\alpha\gamma : \gamma\in B_n\}$.
Define ${\inf{\!}_s}(\alpha)=\max\{\inf(\beta):\beta\in [\alpha]\}$,
${\sup{\!}_s}(\alpha)=\min\{\sup(\beta): \beta\in [\alpha]\}$
and $\operatorname{len}_s(\alpha)=\min\{\,\operatorname{len}(\beta): \beta\in [\alpha]\,\}$.
The \emph{super summit set} $[\alpha]^S$,
the \emph{ultra summit set} $[\alpha]^U$
and the \emph{stable super summit set} $[\alpha]^{St}$
are subsets of the conjugacy class of $\alpha$
defined as follows:
\begin{align*}
[\alpha]^S &=\{\beta\in [\alpha]: \operatorname{len}(\beta)=\operatorname{len}_s(\alpha)\};\\{}
[\alpha]^U &=\{\beta\in [\alpha]^S:
\mathbf{c}^k(\beta)=\beta\ \mbox{ for some $k\geqslant 1$}\};\\{}
[\alpha]^{St} &=\{\beta\in [\alpha]^S: \beta^k\in[\alpha^k]^S \
\mbox{ for all $k\in{\mathbb Z}$}\}.
\end{align*}
It is known that $[\alpha]^S$, $[\alpha]^U$ and $[\alpha]^{St}$
are finite and nonempty~\cite{EM94,Geb05,BGG07a,LL08}.
It is also known that $\beta\in [\alpha]^S$
 if and only if
$\inf(\beta)={\inf{\!}_s}(\alpha)$ and $\sup(\beta)={\sup{\!}_s}(\alpha)$.

\subsection{Simple elements}
This subsection recalls some properties of simple elements
in the dual Garside structure of $B_n$.
See~\cite{BKL98,Bes03} for details.

\begin{definition}
For $T=\{i_1,\ldots,i_k\}$ with $1\leqslant i_1 < \cdots < i_k\leqslant n$,
the braid $a_{i_k i_{k-1}} \cdots a_{i_3 i_2} a_{i_{2}i_1}$
is denoted by either $b_T$ or $[i_k,\ldots,i_1]$.
Such a braid $b_T$ is called a \emph{subsimple}.
When $T$ is a singleton $\{i_1\}$, the braid
$b_{T}$ is defined to be the identity.
\end{definition}

A partition of a set is a collection of pairwise disjoint
subsets, called {\em blocks}, whose union is the entire set. A
partition of $\{1,\ldots,n\}$ is called a \emph{noncrossing
partition} if no two blocks cross each other, that is, if there is
no quadruple $(i_1,j_1,i_2,j_2)$ with $1\leqslant i_1<j_1<i_2<j_2\leqslant n$
such that $i_1$ and $i_2$ belong to one block and $j_1$ and $j_2$
belong to another. Equivalently, if $P_1,\ldots,P_n$ are vertices of
a regular $n$-gon as in the middle picture of Figure~\ref{fig:ncp},
then the convex hulls of
the respective blocks are pairwise disjoint.

\begin{figure}
\includegraphics[scale=1]{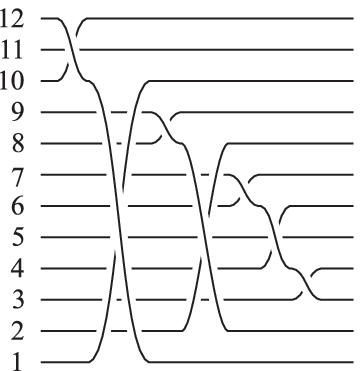}\qquad
\includegraphics[scale=.8]{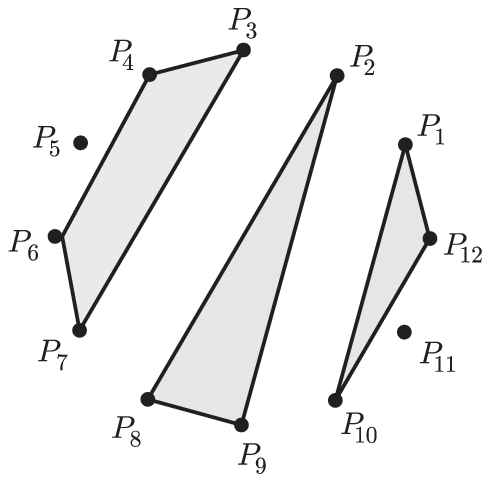}\qquad
\raisebox{12mm}{\footnotesize
$\begin{xy}
0;/r.20pc/:
(-5,0); (60,0) **@{-};
( 0,0); (45,0) **\crv{( 1,15) & (44,15)} ?(0.65);
( 0,0); (55,0) **\crv{( 1,19) & (54,19)} ?(0.65);
(45,0); (55,0) **\crv{(46, 5) & (54, 5)} ?(0.65);
( 5,0); (35,0) **\crv{( 6,11) & (34,11)} ?(0.65);
( 5,0); (40,0) **\crv{( 6,13) & (39,13)} ?(0.65);
(35,0); (40,0) **\crv{(36, 3) & (39, 3)} ?(0.65);
(10,0); (15,0) **\crv{(11, 3) & (14, 3)} ?(0.65);
(25,0); (30,0) **\crv{(26, 3) & (29, 3)} ?(0.65);
(15,0); (25,0) **\crv{(16, 5) & (24, 5)} ?(0.65);
(10,0); (30,0) **\crv{(11, 9) & (29, 9)} ?(0.65);
( 0,-3) *{1};
( 5,-3) *{2};
(10,-3) *{3};
(15,-3) *{4};
(20,-3) *{5};
(25,-3) *{6};
(30,-3) *{7};
(35,-3) *{8};
(40,-3) *{9};
(45,-3) *{10};
(50,-3) *{11};
(55,-3) *{12};
( 0,0) *{\bullet};
( 5,0) *{\bullet};
(10,0) *{\bullet};
(15,0) *{\bullet};
(20,0) *{\bullet};
(25,0) *{\bullet};
(30,0) *{\bullet};
(35,0) *{\bullet};
(40,0) *{\bullet};
(45,0) *{\bullet};
(50,0) *{\bullet};
(55,0) *{\bullet};
\end{xy}$}

\caption{The picture on the left is a braid diagram of the simple element
$[12,10,1]\,[9,8,2]\,[7,6,4,3]$ in $B_{12}$
and the other pictures illustrate the corresponding noncrossing partition
in two distinct ways.}
\label{fig:ncp}
\end{figure}

\begin{theorem}[Theorem 3.4 of ~\cite{BKL98}]
\label{thm:simple}
A braid $a\in B_n$ belongs to $[1, \delta]$ if and only if
$a=b_{T_1}\cdots b_{T_k}$ for some noncrossing
partition $\{T_1,\ldots,T_k\}$ of $\{1,\ldots,n\}$.
\end{theorem}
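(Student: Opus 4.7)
The plan is to establish the bijection $\Phi:\mathcal{P}=\{T_1,\ldots,T_k\}\mapsto b_{T_1}\cdots b_{T_k}$ between noncrossing partitions of $\{1,\ldots,n\}$ and simple elements of $B_n$, verifying four things in turn: well-definedness, divisibility by $\delta$, injectivity, and surjectivity.

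Well-definedness reduces to commutation of the factors: for noncrossing blocks $T_i,T_j$, any two band generators $a_{pq}$ with $p,q\in T_i$ and $a_{rs}$ with $r,s\in T_j$ have indices that are either cyclically interleaved and disjoint or properly nested, so the hypothesis $(p-r)(p-s)(q-r)(q-s)>0$ of the commuting relation in the dual presentation holds. Thus $b_{T_i}$ and $b_{T_j}$ commute, and $\Phi(\mathcal{P})$ depends only on $\mathcal{P}$. For divisibility, I note that $\delta=b_{\{1,\ldots,n\}}$ by direct inspection, introduce the Kreweras complement $K(\mathcal{P})$ (the coarsest noncrossing partition of the ``gaps'' between consecutive points whose chords do not cross those of $\mathcal{P}$), and prove the key identity
$$
\Phi(\mathcal{P})\cdot\Phi(K(\mathcal{P}))=\delta.
$$
The identity is established by using the braid relations of the dual presentation to rearrange the combined product---the chord systems of $\mathcal{P}$ and $K(\mathcal{P})$ together triangulate the disk with exactly $n-1$ chords, matching the atom count of $\delta$---into the standard word $a_{n,n-1}\cdots a_{21}$. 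In particular this gives $\Phi(\mathcal{P})\preccurlyeq\delta$, so $\Phi(\mathcal{P})$ is simple.

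Injectivity of $\Phi$ follows by projecting to $S_n$ via $\pi(a_{ij})=(i\,j)$: direct computation shows that $\pi(b_T)$ is the single cycle supported on $T$ with the induced cyclic order, so $\pi(\Phi(\mathcal{P}))$ is the permutation whose cycle partition is exactly $\mathcal{P}$, and $\mathcal{P}$ is recoverable from $\Phi(\mathcal{P})$. For surjectivity---the main obstacle---I would argue by cardinality. A recursive count of $|[1,\delta]|$, obtained by peeling off a leftmost minimal band-generator prefix of a simple element $a$ and reducing to the dual braid monoids of the two arcs into which the chord partitions the disk, yields $|[1,\delta]|=C_n=\frac{1}{n+1}\binom{2n}{n}$, which matches the number of noncrossing partitions of $\{1,\ldots,n\}$; combined with the injection from the previous step, this forces $\Phi$ to be a bijection.

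The hardest step is the identity $\Phi(\mathcal{P})\cdot\Phi(K(\mathcal{P}))=\delta$, since one must produce an explicit sequence of dual braid relations---both commutations and triangular relations of the form $a_{ij}a_{jk}=a_{jk}a_{ik}=a_{ik}a_{ij}$---converting a product of chord-indexed subsimples into the standard expression for $\delta$. The geometric picture of a planar sweep across the combined triangulation makes the outcome transparent, but the algebraic bookkeeping (together with making sure the recursive count in the surjectivity step accounts for each simple element exactly once) requires care.
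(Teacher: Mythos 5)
First, note that the paper itself gives no proof of this statement: it is imported verbatim as Theorem 3.4 of \cite{BKL98}, and the paper reproduces only its working ingredients, namely Lemmas~\ref{lem:simp2} and~\ref{lem:sim}. The original BKL argument runs through exactly those lemmas: simplicity of a positive word reduces to pairwise simplicity of its letters, pairwise simplicity is the disjoint-arcs condition, and one then checks directly that the chords of the letters of a simple word are pairwise noncrossing and consistently ordered within each connected component, so that they assemble into the descending cycles $b_T$ over the blocks of a noncrossing partition. Your architecture is genuinely different: a bijection argument via well-definedness, the Kreweras complement identity $\Phi(\mathcal{P})\cdot\Phi(K(\mathcal{P}))=\delta$, injectivity through the permutation projection, and surjectivity by a Catalan count. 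This is the modern lattice-theoretic route to $NC(n)\cong[1,\delta]$, and your first three steps are sound: the commutation argument for well-definedness is correct (pairs from distinct noncrossing blocks are unlinked in the linear order, so the sign condition of the dual presentation holds), injectivity via cycle decompositions is correct, and the Kreweras identity is true with the right orientation convention (note $\#\mathcal{P}+\#K(\mathcal{P})=n+1$, so the combined chord count is indeed $n-1$, matching the atom length of $\delta$ --- though the union is not literally a triangulation of the disk), even if you only gesture at its proof.

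The genuine gap is surjectivity. Your cardinality argument needs $\#[1,\delta]=C_n=\frac{1}{n+1}\binom{2n}{n}$ established \emph{independently} of the theorem, and the peeling recursion as described does not do this. A simple element has many atom divisors and you specify no canonical ``leftmost minimal'' prefix; worse, the claim that after removing a prefix $a_{ij}$ the remainder decomposes into the dual monoids of the two arcs cut off by the chord $\{i,j\}$ is precisely the noncrossing-support property the theorem asserts, so as stated the recursion is circular. The circularity can be broken by invoking the pairwise criterion of Lemmas~\ref{lem:sim} and~\ref{lem:simp2}: any positive word for a simple element has pairwise noncrossing, consistently ordered letters, which localizes the complement of a prefix to the two arcs. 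But once those lemmas are in hand, the direct BKL argument yields surjectivity immediately --- group the chords of the letters into connected components and verify each component multiplies to a $b_T$ --- making both the Catalan count and the Kreweras identity (your self-declared hardest step) unnecessary detours. So the proposal, while structurally attractive, either rests on an unproved and underspecified counting step or, once repaired, collapses into the original proof it was meant to replace.
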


In the above theorem, for $a\in [1,\delta]$, 
the decomposition $a=b_{T_1}\cdots b_{T_k}$  
is unique up to reordering the factors because $b_{T_i}$ and $b_{T_j}$ commute for all $i,j$.
Each factor $b_{T_i}$ is called a \emph{subsimple of $a$}.

\begin{definition}
For a subsimple $b=[i_k,\ldots,i_1]$ with $k\geqslant 2$,
the set $\{i_1,\ldots,i_k\}$  is called the \emph{support} of $b$,
denoted by ${\operatorname{supp}}(b)$.
We say that a simple element $a$ is \emph{supported on}
$S\subseteq\{1,\ldots,n\}$
if ${\operatorname{supp}}(b)\subseteq S$ for each nonidentity subsimple $b$ of $a$.
\end{definition}

\begin{example}
Figure~\ref{fig:ncp} illustrates the noncrossing partition
$$\{\, \{12,10,1\},\ \{9,8,2\},\ \{7,6,4,3\},\ \{11\},\ \{5\}\, \}.$$
The corresponding simple element is $a=[12,10,1]\,[9,8,2]\,[7,6,4,3]$ in $B_{12}$.
Thus $a$ consists of three subsimples $[12,10,1]$, $[9,8,2]$ and $[7,6,4,3]$,
and it is supported on $\{1,\ldots,12\}\setminus\{5,11\}$.
\end{example}

For a noncrossing partition $\mathcal T=\{T_1,\ldots,T_k\}$ of $\{1,\ldots,n\}$,
we denote by $b_{\mathcal T}$ the braid $b_{T_1}\cdots b_{T_k}$.
Because two braids $b_{T_i}$ and $b_{T_j}$ commute for $i\ne j$,
the product $b_{T_1}\cdots b_{T_k}$
is well-defined.

By Theorem~\ref{thm:simple},
the set of simple elements in the dual Garside structure of $B_n$
is in one-to-one correspondence with
the set of noncrossing partitions of $\{1,\ldots,n\}$,
where the nonidentity subsimples correspond to the blocks containing at least two numbers.
For partitions $\mathcal T$ and $\mathcal T'$
of $\{1,\ldots,n\}$, it is known that
$b_{\mathcal T}\preccurlyeq b_{\mathcal T'}$
if and only if $\mathcal T$ is a refinement of $\mathcal T'$,
i.e.\ each block of $\mathcal T$ is a subset of some block of $\mathcal T'$.

\begin{lemma}\label{lem:subsim}
Let $b$ be a subsimple.
\begin{enumerate}
\item $a_{ij}\preccurlyeq b$ if and only if $\{i,j\}\subseteq{\operatorname{supp}}(b)$.
\item For a simple element $a$,  $a\preccurlyeq b$ if and only if $a$ is
supported on ${\operatorname{supp}}(b)$.
\end{enumerate}
\end{lemma}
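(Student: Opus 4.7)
The plan is to translate both assertions into the language of noncrossing partitions and apply the refinement characterization of $\preccurlyeq$ on simple elements stated just before the lemma. Throughout, I write $\operatorname{supp}(b)=\{i_1,\ldots,i_k\}$ for $b=[i_k,\ldots,i_1]$; the noncrossing partition associated to $b$ has the single non-singleton block $\operatorname{supp}(b)$, and all other blocks are singletons.

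For part (1), I assume without loss of generality $i>j$, so that the band generator $a_{ij}$ is itself the subsimple $[i,j]$ corresponding to the noncrossing partition with non-singleton block $\{i,j\}$ and singletons elsewhere. By the refinement criterion, $a_{ij}\preccurlyeq b$ holds iff this partition refines that of $b$, i.e.\ iff $\{i,j\}$ lies inside some block of $b$'s partition. Since $\{i,j\}$ has two elements it cannot be contained in a singleton block, so the condition reduces to $\{i,j\}\subseteq \operatorname{supp}(b)$.

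For part (2), I invoke Theorem~\ref{thm:simple} to write $a=b_{T_1}\cdots b_{T_m}$ for the noncrossing partition $\mathcal T=\{T_1,\ldots,T_m\}$ corresponding to $a$. By the refinement criterion, $a\preccurlyeq b$ iff every $T_i$ is contained in some block of $b$'s partition. Singleton $T_i$'s (which correspond to identity subsimples $b_{T_i}$) refine any partition automatically, while a $T_i$ with $|T_i|\geqslant 2$ must lie in the unique non-singleton block $\operatorname{supp}(b)$, i.e.\ $\operatorname{supp}(b_{T_i})=T_i\subseteq\operatorname{supp}(b)$. This is exactly the statement that each nonidentity subsimple of $a$ has support inside $\operatorname{supp}(b)$, which is the definition of $a$ being supported on $\operatorname{supp}(b)$.

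There is really no hard step here; the content is bookkeeping between three equivalent viewpoints (simples, noncrossing partitions, refinement). The one place to be a little careful is that the definition of "$a$ is supported on $S$" quantifies only over \emph{nonidentity} subsimples, which is why singleton blocks of $\mathcal T$ need to be separated from the rest before applying the refinement criterion. Note also that (1) is in fact the special case $a=a_{ij}$ of (2), so if desired one could prove (2) first and deduce (1).
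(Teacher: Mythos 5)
Your proof is correct, and it follows exactly the route the paper intends: the paper states this lemma without proof, immediately after the refinement characterization of $\preccurlyeq$ on simple elements, which is precisely the tool you use. Your bookkeeping (singleton blocks refine automatically; non-singleton blocks must land in the unique non-singleton block $\operatorname{supp}(b)$) and the observation that (1) is the case $a=a_{ij}$ of (2) fill in the implicit argument faithfully.
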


The following lemmas provide easy ways to check
that a given positive braid is a simple element.

\begin{lemma}[Lemma 3.3 and Corollary 3.6 in~\cite{BKL98}]
\label{lem:simp2}
$a_{pq}a_{ij}$ is a simple element if and only if,
for $0<\lambda<1$,
we can connect $(p+\lambda,0)$ to $(q+\lambda,0)$
and $(i,0)$ to $(j,0)$ by nonintersecting arcs
in the upper half plane ${\mathbb R}^2_+=\{(x,y)\in{\mathbb R}^2\mid y\geqslant 0\}$.
(See Figure~\ref{fig:simp2}.)
\end{lemma}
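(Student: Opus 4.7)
The plan is to read the lemma through the dictionary of Theorem~\ref{thm:simple} and Lemma~\ref{lem:subsim}: a simple element is exactly a product $b_{\mathcal T}$ for a noncrossing partition $\mathcal T$, so the assertion must be translated into the statement ``the chords $\{p,q\}$ and $\{i,j\}$ fit inside a single noncrossing partition on $\{1,\ldots,n\}$, in the correct order dictated by the multiplication''. The role of the shift by $\lambda$ is purely to record which of the two band generators appears first in the product: drawing the arc for $a_{pq}$ at the positions $p+\lambda, q+\lambda$ pushes that arc slightly ``above'' (or to the right of) the arc for $a_{ij}$, which is the geometric manifestation of the braid relation $a_{ij}a_{jk}=a_{jk}a_{ik}=a_{ik}a_{ij}$.

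For the direction ($\Leftarrow$), I would split on whether $\{p,q\}$ and $\{i,j\}$ are disjoint or share exactly one element (they cannot coincide, since $a_{pq}a_{ij}=a_{pq}^{2}$ is never simple). If $\{p,q\}\cap\{i,j\}=\emptyset$, the shift is immaterial and nonintersecting arcs mean the chords themselves are noncrossing, so $\{\{p,q\},\{i,j\}\}$ together with the remaining singletons is a noncrossing partition; Theorem~\ref{thm:simple} then gives $a_{pq}a_{ij}=b_{\{p,q\}}b_{\{i,j\}}\in[1,\delta]$. If the pairs share one point, I would check the six orderings of $(p,q,i,j)$ and verify that the three which admit nonintersecting shifted arcs are exactly the three orderings for which the band relations rewrite $a_{pq}a_{ij}$ as the $3$-subsimple $[x,y,z]$ of the block $\{x,y,z\}=\{p,q\}\cup\{i,j\}$ with $z<y<x$, namely $a_{xy}a_{yz}$, $a_{yz}a_{xz}$, and $a_{xz}a_{xy}$. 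Each such subsimple is simple by Theorem~\ref{thm:simple}.

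For ($\Rightarrow$), suppose $a_{pq}a_{ij}=b_{\mathcal T}$ for some noncrossing partition $\mathcal T$. The exponent sum forces $\sum_{T\in\mathcal T}(|T|-1)=2$, so either $\mathcal T$ has two blocks of size two (and the rest singletons) or one block of size three. Lemma~\ref{lem:subsim}(1) applied to $a_{pq}\preccurlyeq b_{\mathcal T}$ and $a_{ij}\preccurlyeq b_{\mathcal T}$ places $\{p,q\}$ and $\{i,j\}$ each inside a block of $\mathcal T$. In the two-block case these blocks must equal $\{p,q\}$ and $\{i,j\}$, which are noncrossing, so the chords (hence their arbitrarily small shifts) can be drawn nonintersectingly. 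In the three-block case $\{p,q\}\cup\{i,j\}$ is the $3$-block, and the ordering of $(p,q,i,j)$ must be one of the three admissible ones singled out in the first direction, each of which allows nested or side-by-side shifted arcs.

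The main obstacle is the shared-endpoint case: both directions reduce to the same six-way case check, and the only thing that really needs care is tracking, for each ordering of $(p,q,i,j)$, how the shift by $\lambda$ interleaves the four points $p+\lambda,q+\lambda,i,j$ on the real line and correlating this with the three valid band-relation rewrites. Everything else is a direct appeal to Theorem~\ref{thm:simple} and Lemma~\ref{lem:subsim}.
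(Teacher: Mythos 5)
The paper never proves this lemma: it is imported verbatim from \cite{BKL98} (Lemma 3.3 and Corollary 3.6), where it is established directly from the band-generator presentation, prior to and as an ingredient of the noncrossing-partition characterization. Your derivation runs in the opposite direction, deducing the two-letter criterion from Theorem~\ref{thm:simple}. That is legitimate within this paper, where Theorem~\ref{thm:simple} is quoted as a black box, though it would be circular inside the development of \cite{BKL98} itself. Your reading of the statement is correct: the shift by $\lambda$ encodes only the order of the two factors; it is immaterial when the supports are disjoint (every comparison such as $j<q+\lambda$ reduces to the integer comparison $j\leqslant q$, so the shifted arcs cross if and only if the chords cross); and for a shared endpoint with $z<y<x$, the three products $a_{xy}a_{yz}$, $a_{yz}a_{xz}$, $a_{xz}a_{xy}$ are exactly the rewritings of $[x,y,z]$ permitted by the relation $a_{xy}a_{yz}=a_{yz}a_{xz}=a_{xz}a_{xy}$, while direct inspection of the shifted intervals shows they are nested or disjoint in precisely these three orderings and linked in the remaining three. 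The exponent-sum argument pinning $\mathcal T$ down to two $2$-blocks or one $3$-block is also sound.

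One step needs repair. In the direction ($\Rightarrow$) you invoke Lemma~\ref{lem:subsim}(1) for $a_{ij}\preccurlyeq b_{\mathcal T}$, but $\preccurlyeq$ is \emph{left} divisibility, and from $a_{pq}a_{ij}=b_{\mathcal T}$ the generator $a_{ij}$ is a priori only a \emph{right} divisor. The fact you need --- that left and right divisors of a simple element $b_{\mathcal T}$ coincide --- is true (for instance because $b_{\mathcal T}$ is itself the Garside element of the sub-Garside monoid determined by the blocks of $\mathcal T$, and for a Garside element left and right divisors agree; in the shared-endpoint case one can also see it by hand from the three rewritings above), but it is not contained in Lemma~\ref{lem:subsim} as stated and must be argued. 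A second, cosmetic point: in the direction ($\Leftarrow$) the case $\{p,q\}=\{i,j\}$ should be excluded by noting that the shifted arcs are then forced to cross (since $q<q+\lambda<p<p+\lambda$ is linked), not by observing that $a_{pq}^{2}$ fails to be simple, which argues the converse implication. With the right-divisor point patched, your proof is complete, and it is an instructive alternative to the original: \cite{BKL98} proves the criterion by elementary manipulation of the defining relations and then builds Theorem~\ref{thm:simple} on top of it, whereas your route shows the criterion is an easy formal consequence once the noncrossing-partition dictionary is available.
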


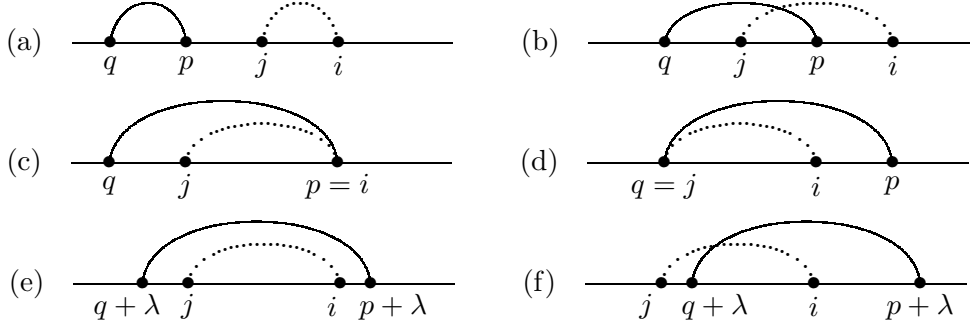
\begin{figure}
(a)\quad
$\begin{xy}
(5,0); (55,0) **@{-};
(10,0); (20,0) **\crv{(11,7) & (19,7)} ?(0.65) ;
(30,0); (40,0) **\crv{~*=<3pt>{.}  (31,7) & (39,7)} ?(0.65);
(10,0) *{\bullet};
(20,0) *{\bullet};
(30,0) *{\bullet};
(40,0) *{\bullet};
(10,-3) *{q};
(20,-3) *{p};
(30,-3) *{j};
(40,-3) *{i};
\end{xy}$
\qquad(b)\quad
$\begin{xy}
(0,0); (50,0) **@{-};
(10,0); (30,0) **\crv{(11,7) & (29,7)} ?(0.65) ;
(20,0); (40,0) **\crv{~*=<3pt>{.}  (21,7) & (39,7)} ?(0.65);
(10,0) *{\bullet};
(20,0) *{\bullet};
(30,0) *{\bullet};
(40,0) *{\bullet};
(10,-3) *{q};
(20,-3) *{j};
(30,-3) *{p};
(40,-3) *{i};
\end{xy}$

(c)\quad
$\begin{xy}
(5,0); (55,0) **@{-};
(20,0); (40,0) **\crv{~*=<3pt>{.} (21,7) & (39,7)} ?(0.65) ;
(10,0); (40,0) **\crv{(11,11) & (39,11)} ?(0.65) ;
(10,0) *{\bullet};
(20,0) *{\bullet};
(40,0) *{\bullet};
(10,-3) *{q};
(20,-3) *{j};
(40,-3) *{p=i};
\end{xy}$
\qquad(d)\quad
$\begin{xy}
(0,0); (50,0) **@{-};
(10,0); (30,0) **\crv{~*=<3pt>{.} (11,7) & (29,7)} ?(0.65) ;
(10,0); (40,0) **\crv{(11,11) & (39,11)} ?(0.65) ;
(10,0) *{\bullet};
(30,0) *{\bullet};
(40,0) *{\bullet};
(10,-3) *{q=j};
(30,-3) *{i};
(40,-3) *{p};
\end{xy}$

(e)\quad
$\begin{xy}
(5,0); (55,0) **@{-};
(20,0); (40,0) **\crv{~*=<3pt>{.} (21,7) & (39,7)} ?(0.65) ;
(14,0); (44,0) **\crv{(15,11) & (43,11)} ?(0.65) ;
(14,0) *{\bullet};
(20,0) *{\bullet};
(40,0) *{\bullet};
(44,0) *{\bullet};
(12,-3) *{q+\lambda};
(20,-3) *{j};
(39,-3) *{i};
(47,-3) *{p+\lambda};
\end{xy}$
\qquad(f)\quad
$\begin{xy}
(0,0); (50,0) **@{-};
(10,0); (30,0) **\crv{~*=<3pt>{.} (11,7) & (29,7)} ?(0.65) ;
(14,0); (44,0) **\crv{(15,11) & (43,11)} ?(0.65) ;
(10,0) *{\bullet};
(14,0) *{\bullet};
(30,0) *{\bullet};
(44,0) *{\bullet};
(8,-3) *{j};
(17,-3) *{q+\lambda};
(30,-3) *{i};
(44,-3) *{p+\lambda};
\end{xy}$

\caption{The pictures (a) and (c) show the case
where $a_{pq}a_{ij}$ is a simple element and
the pictures (b) and (d) show the other case.
The pictures (e) and (f) show the arcs
between $(p+\lambda,0)$ and $(q+\lambda,0)$
and between $(i,0)$ and $(j,0)$.}
\label{fig:simp2}
\end{figure}

\begin{lemma}[Lemma 3.3 and Corollary 3.6 in~\cite{BKL98}]
\label{lem:sim}
Let $a=x_1\cdots x_k\in B_n^+$ for some band generators $x_i$.
Then $a$ is a simple element if and only if
$x_ix_j$ is a simple element for all $1\leqslant i<j\leqslant k$.
\end{lemma}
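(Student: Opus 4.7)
My plan is to work through the arc-diagram picture of band generators in the upper half-plane that underlies Lemma \ref{lem:simp2}. To the product $a=x_1\cdots x_k$ with $x_m=a_{p_mq_m}$, I assign distinct shifts $1>\lambda_1>\cdots>\lambda_k>0$ and represent $x_m$ by the arc $A_m$ in the closed upper half-plane joining $(q_m+\lambda_m,0)$ to $(p_m+\lambda_m,0)$. The shifts guarantee that all $2k$ boundary endpoints are pairwise distinct real numbers. The central intermediate claim I will establish is a $k$-generator extension of Lemma \ref{lem:simp2}: the braid $a$ is simple in the dual Garside structure if and only if the arcs $A_1,\ldots,A_k$ can be drawn simultaneously pairwise disjoint in the upper half-plane.

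Once this characterisation is in place, the forward direction is immediate: if $a$ is simple then the arcs are jointly disjoint, so for any $i<j$ the arcs $A_i$ and $A_j$ alone are disjoint. A horizontal translation by $-\lambda_j$ moves $A_j$ into its unshifted position while leaving $A_i$ with a shift $\lambda_i-\lambda_j\in(0,1)$, so the hypothesis of Lemma \ref{lem:simp2} is met and $x_ix_j$ is simple.

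For the converse, I assume every $x_ix_j$ is simple and invoke Lemma \ref{lem:simp2} to obtain, for each pair $i<j$, a drawing in which $A_i$ and $A_j$ are disjoint. The step I expect to be the main obstacle is upgrading these pairwise drawings to a single drawing in which all $k$ arcs are simultaneously disjoint. For this I will use the elementary fact that an arc in the upper half-plane with distinct boundary endpoints is determined up to boundary-rel isotopy by its pair of endpoints, and that two such arcs can be drawn disjoint iff their endpoints do not interleave on $\mathbb{R}$; since non-interleaving is strictly a pairwise condition on four points of $\mathbb{R}$, pairwise disjointness forces joint disjointness. Invoking the characterisation from the first paragraph then yields that $a$ is simple. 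The hardest ingredient of the proof is therefore establishing that characterisation beyond the two-generator case; the cleanest route is induction on $k$, peeling off an extremal arc (one that is not separated from $\mathbb{R}$ by any other arc) and using Theorem \ref{thm:simple} together with the commutations implied by arc disjointness to rewrite the product as the simple element $b_{\mathcal{T}}$ associated to the noncrossing partition $\mathcal{T}$ determined by the arcs.
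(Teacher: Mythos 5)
First, a remark on the comparison itself: the paper does not prove this lemma — it is quoted from \cite{BKL98} (their Lemma 3.3 and Corollary 3.6) — so your attempt can only be judged on its own merits and against the cited source, which develops the same band/arc picture alongside monoid-theoretic facts (cancellativity, etc.). Your skeleton is the right picture: assigning strictly decreasing shifts $\lambda_1>\cdots>\lambda_k$ to the letters correctly encodes the word order (it breaks ties at shared integer endpoints exactly as in cases (c)--(f) of Lemma~\ref{lem:simp2}, which is essential since, e.g., $a_{ij}a_{jk}$ is simple while $a_{jk}a_{ij}$ is not); the translation by $-\lambda_j$ correctly reduces each pair $x_ix_j$, $i<j$, to the two-generator criterion; and the claim that disjoint realizability of boundary arcs in the half-plane is equivalent to non-interleaving of endpoint pairs — a purely pairwise condition that upgrades automatically to simultaneous disjointness (draw semicircles) — is correct and is the combinatorial heart of why a pairwise hypothesis suffices.

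The genuine gap is that your plan proves only one direction of your central characterization, and hence only the ``if'' half of the lemma. The peeling induction you sketch (extract an extremal arc, rewrite the word as $b_{\mathcal T}$) shows that a jointly noncrossing diagram forces $a$ to be simple. The ``only if'' half of the lemma needs the converse: \emph{every} positive word $x_1\cdots x_k$ representing a simple element has a noncrossing diagram, and for this you give no argument anywhere — it is not formal, because you must rule out that a word containing a crossing pair becomes a divisor of $\delta$ after applying the monoid relations. The standard way to close this hole is to verify that noncrossingness of the shifted diagram is invariant under the defining relations of the dual monoid — immediate for the commutation relations, and a finite case check (including arcs sharing an integer endpoint) for the triple relations $a_{ij}a_{jk}=a_{jk}a_{ik}=a_{ik}a_{ij}$ — combined with the nontrivial input that this presentation presents the monoid $B_n^+$, which \cite{BKL98} establish separately. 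A secondary inaccuracy sits inside your induction: ``commutations implied by arc disjointness'' is false as stated, since disjoint arcs sharing an integer endpoint (e.g.\ $a_{ij}$ followed by $a_{jk}$) correspond to generators that do \emph{not} commute; the rewriting must invoke the triple relations, not commutation. As written, the proposal establishes only half of the equivalence.
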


An immediate consequence of the above lemma is the following.

\begin{lemma}\label{lem:defrag}
If $a_1b_1a_2b_2\cdots a_kb_k$ is a simple element
with $a_i, b_i\in B_n^+$ for $1\leqslant i\leqslant k$,
then $a_1a_2\cdots a_k$ is a simple element.
\end{lemma}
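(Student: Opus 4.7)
The plan is to reduce immediately to Lemma \ref{lem:sim}, which reduces simplicity of a positive word in band generators to a pairwise check. First I would expand each factor as a word in band generators: write $a_i = x_{i,1}\cdots x_{i,p_i}$ and $b_i = y_{i,1}\cdots y_{i,q_i}$ with each $x_{i,s}, y_{i,t}$ a band generator, so that
\[
a_1 b_1 a_2 b_2 \cdots a_k b_k \;=\; x_{1,1}\cdots x_{1,p_1}\, y_{1,1}\cdots y_{1,q_1}\,\cdots\, x_{k,1}\cdots x_{k,p_k}\, y_{k,1}\cdots y_{k,q_k}
\]
is a positive word $W$ in the band generators.

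Since $W$ represents a simple element, Lemma \ref{lem:sim} tells us that for any two letters appearing in $W$, in the order they appear, their product is simple. Now concatenate the $a_i$'s to get
\[
a_1 a_2 \cdots a_k \;=\; x_{1,1}\cdots x_{1,p_1}\, x_{2,1}\cdots x_{2,p_2}\,\cdots\, x_{k,1}\cdots x_{k,p_k},
\]
which is a subword of $W$ obtained by deleting the $y_{i,t}$-letters while preserving the relative order of the surviving letters. Therefore any pair of letters in this subword, taken in the order they appear, is a pair of letters of $W$ in the order they appear in $W$, and hence multiplies to a simple element. Applying Lemma \ref{lem:sim} in the reverse direction to this subword yields that $a_1 a_2 \cdots a_k$ is simple.

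There is no real obstacle here: the only thing to be careful about is that the pairwise criterion of Lemma \ref{lem:sim} is order-sensitive (it concerns products $x_i x_j$ with $i<j$), so one must observe that deleting the $b_i$-letters preserves the relative order of the $a_i$-letters. Both applications of Lemma \ref{lem:sim} are legitimate because both $W$ and the subword obtained by deletion are positive words in band generators.
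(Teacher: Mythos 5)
Your proof is correct and matches the paper's intended argument: the paper gives no explicit proof, stating the lemma as ``an immediate consequence'' of Lemma~\ref{lem:sim}, and the implicit reasoning is exactly your subword argument --- deleting the $b_i$-letters preserves the relative order of the remaining letters, so both directions of the pairwise criterion apply. Your explicit remark about the order-sensitivity of Lemma~\ref{lem:sim} is the one point worth making, and you made it.
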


\subsection{Periodic braids}
This subsection recalls some results on periodic braids
in the dual Garside structure of $B_n$ from~\cite{LL11}.

\begin{lemma}[Lemma 3.6 in~\cite{LL11}] \label{lem:perlen}
If $\alpha\in B_n$ is periodic, then $\operatorname{len}_s (\alpha)\in\{0,1\}$.
\end{lemma}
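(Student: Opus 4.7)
The plan is to invoke the Brouwer--Ker\'ekj\'art\'o--Eilenberg classification recalled in the introduction: every periodic $n$-braid is conjugate to a power of either $\delta$ or $\varepsilon$. Since ${\inf{\!}_s}$, ${\sup{\!}_s}$ and hence $\operatorname{len}_s$ are conjugacy invariants, it suffices to verify $\operatorname{len}_s(\delta^k)\leqslant 1$ and $\operatorname{len}_s(\varepsilon^k)\leqslant 1$ for every $k\in\mathbb{Z}$.

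The case $\alpha=\delta^k$ is immediate: $\delta^k$ is already in its own left normal form with canonical length $0$, so $\operatorname{len}_s(\delta^k)=0$.

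For $\alpha=\varepsilon^k$, I would first use the central relation $\varepsilon^{n-1}=\delta^n$ to reduce to $0\leqslant r<n-1$ by writing $k=q(n-1)+r$, since the central factor $\delta^{qn}$ only shifts infimum and supremum simultaneously. I then compute $\varepsilon^r$ in closed form from $\varepsilon=\delta\,a_{21}$ together with the identities $\alpha\delta=\delta\,\tau(\alpha)$ and $\tau(a_{ij})=a_{i+1,j+1}$:
$$
\varepsilon^r=(\delta a_{21})^r
=\delta^r\,\tau^{r-1}(a_{21})\,\tau^{r-2}(a_{21})\cdots\tau(a_{21})\,a_{21}
=\delta^r\,a_{r+1,r}\,a_{r,r-1}\cdots a_{3,2}\,a_{2,1}
=\delta^r\,[r+1,r,\ldots,2,1].
$$
Since $r+1\leqslant n-1$, the factor $[r+1,\ldots,2,1]$ is a subsimple supported on $\{1,\ldots,r+1\}$, hence a proper simple element, and therefore $\varepsilon^k=\delta^{qn+r}\,[r+1,\ldots,2,1]$ is in left normal form with canonical length at most $1$. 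This gives $\operatorname{len}_s(\varepsilon^k)\leqslant\operatorname{len}(\varepsilon^k)\leqslant 1$.

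The main step, such as it is, is recognizing that the $r$-fold product of $a_{21}$ collapses into a single subsimple after pulling all the $\delta$'s to the left through the rotation $\tau$; everything else is bookkeeping. I expect no serious obstacle, since the reduction modulo the period kills any $k$-dependence beyond $0\leqslant r<n-1$ and the resulting simple factor $[r+1,\ldots,2,1]$ is visibly proper. The only sanity check to perform is the boundary case $r=n-1$, where the same calculation recovers $\varepsilon^{n-1}=\delta^n$ exactly, confirming that the central relation has been used consistently.
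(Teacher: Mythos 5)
Your argument is correct, and note that the paper contains no internal proof of this lemma to compare against: it is imported verbatim from Lemma~3.6 of~\cite{LL11}, where it is established in the broader setting of periodic elements of Garside groups. Your route is instead a direct, self-contained computation special to the dual structure on $B_n$: reduce via the Brouwer--Ker\'ekj\'art\'o--Eilenberg classification and conjugacy invariance of $\operatorname{len}_s$ to the cases $\delta^k$ and $\varepsilon^k$, strip off the central part using $\varepsilon^{n-1}=\delta^n$ (valid for negative $k$ as well, since $q$ may be negative), and verify $\varepsilon^r=\delta^r\,[r+1,r,\ldots,2,1]$ by pushing the $\delta$'s left through $\tau$. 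Every step checks out: for $0\leqslant r\leqslant n-2$ no index wraps modulo $n$ in $\tau^{j}(a_{21})=a_{j+2,j+1}$; the product $a_{r+1,r}\cdots a_{21}$ is exactly the subsimple $[r+1,\ldots,2,1]$ by the paper's definition; this factor lies strictly below $\delta$ since its support $\{1,\ldots,r+1\}$ is a proper subset of $\{1,\ldots,n\}$, so $\delta^r[r+1,\ldots,2,1]$ is a genuine normal form of canonical length at most $1$ (length $0$ exactly when $r=0$, where the factor degenerates to the identity rather than a ``proper simple element'' --- a cosmetic slip in your phrasing, not a gap); and multiplying by the central $\delta^{qn}$ shifts $\inf$ and $\sup$ equally, leaving $\operatorname{len}$ unchanged. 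What your approach buys is elementarity and explicitness --- an actual normal form for every power of $\varepsilon$, from which $\operatorname{len}_s(\varepsilon^k)\leqslant\operatorname{len}(\varepsilon^k)\leqslant 1$ is immediate, and your boundary check $r=n-1$ recovering $\delta^n$ confirms consistency; what it gives up is the generality of the cited result, which is not limited to $B_n$ with its dual Garside structure.
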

By the above lemma, if $\alpha$ is a periodic braid then $[\alpha]^S=[\alpha]^U$.

The notion of cycling can be generalized as follows.

\begin{definition}\label{def:PartCycl}
Let $\alpha=\delta^u a_1a_2\cdots a_\ell \in B_n$ be in normal form.
Let $a_1'$ and $a_1''$ be simple elements such that $a_1=a_1'a_1''$.
The conjugation
$$
\tau^{-u}(a_1')^{-1} \alpha\, \tau^{-u}(a_1')
= \delta^u a_1''a_2\cdots a_\ell\,\tau^{-u}(a_1')
$$
is called a \emph{partial cycling} of $\alpha$ by $a_1'$.
\end{definition}

The following theorem shows that the conjugacy problem for conjugates of $\varepsilon^k$
is equivalent to that for conjugates of $\varepsilon^d$, where $d=\gcd(k,n-1)$,
and then it lists some properties of $[\varepsilon^d]^S$.
It comes from Lemmas 3.18, 3.21 and Theorem 3.24 in~\cite{LL11}.
\begin{theorem}\label{thm:e^d}
Let $B_n$ be endowed with the dual Garside structure.
\begin{enumerate}
\item
Let $k\ne 0$. Suppose $d$, $p$ and $q$ are integers
such that $d=\gcd(k,n-1)$ and $(n-1)p+kq=d$.
Let $\alpha\in B_n$ and $\alpha'=\delta^{np}\alpha^q$.
Then $\alpha$ is conjugate to $\varepsilon^k$ if and only if
$\alpha'$ is conjugate to $\varepsilon^d$.
Moreover, for $\gamma\in B_n$,  $\gamma^{-1}\alpha\gamma=\varepsilon^k$
if and only if $\gamma^{-1}\alpha'\gamma=\varepsilon^d$.

\item
Let $1\leqslant d<n-1$ be a divisor of\/ $n-1$, hence $n=rd+1$ for some $r\geqslant 2$.
\begin{enumerate}
\item $[\varepsilon^d]^{S}= [\varepsilon^d]^{U} = [\varepsilon^d]^{St}$;
\item $[\varepsilon^d]^{S}$ is closed under partial cyclings;
\item every element of $[\varepsilon^d]^{S}$ is of the form $\delta^d a$
for some $a\in(1,\delta)$ satisfying
$a\,\tau^{-d}(a)\,\tau^{-2d}(a)\cdots\tau^{-(r-1)d}(a)= \delta$.
\end{enumerate}
\end{enumerate}
\end{theorem}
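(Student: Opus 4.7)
The plan is to prove the theorem in three stages: (i) derive from the 1-pure hypothesis that $a_{d+1,1}$ is a left factor of $a$; (ii) use the periodicity identity $a\,\tau^{-d}(a)\cdots\tau^{-(r-1)d}(a)=\delta$ from Theorem~\ref{thm:e^d}(2c) to decompose what remains into the sector form, and derive the product identity (2); (iii) verify that the explicit $c$ conjugates $\alpha$ to $\varepsilon^d$.

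For (i), start from $\alpha=\delta^d a$ with $a$ simple given by Theorem~\ref{thm:e^d}(2c). The permutation $\pi(\delta)=(1,n,n-1,\ldots,2)$, combined with $\pi(\alpha)(1)=1$ (the 1-pure hypothesis), gives $\pi(a)(1)=d+1$. Since each subsimple $b_T$ with $T=\{i_1<\cdots<i_k\}$ contributes the cycle $(i_1,i_k,\ldots,i_2)$ to $\pi(a)$, the block $B_1$ of the noncrossing partition of $a$ containing $1$ has maximum element $d+1$, so $B_1\subseteq\{1,\ldots,d+1\}$. By Lemma~\ref{lem:subsim}, $a_{d+1,1}\preccurlyeq a$, and applying the band-generator relation $b_{B_1}=a_{d+1,1}\,b_{B_1\setminus\{1\}}$ lets me factor $a=a_{d+1,1}\,a^*$ with $a^*$ simple. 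Because the convex hull of $B_1$ separates the circle into arcs lying in $\{2,\ldots,d+1\}$ and $\{d+2,\ldots,n\}$, I can further write $a^*=b_0\,a^{**}$ with $b_0$ supported on $\{2,\ldots,d+1\}$ and $a^{**}$ on $\{d+2,\ldots,n\}$.

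Stage (ii) is the combinatorial heart and the main obstacle: showing that $a^{**}=b_1\cdots b_{r-1}$ with each $b_k$ supported on $kd+\{2,\ldots,d+1\}$, and deriving $b_0\,\tau^{-d}(b_1)\cdots\tau^{-(r-1)d}(b_{r-1})=[d+1,\ldots,3,2]$. My plan is to substitute $a=a_{d+1,1}\,b_0\,a^{**}$ into the periodicity equation and use the commutation relations of Section~2.1 to bring the $r$ shifted copies $\tau^{-kd}(a_{d+1,1})$ together. An explicit computation using the band-generator relations shows that these multiply to the subsimple $b_S$ on $S=\{1,d+1,d+2,2d+2,3d+2,\ldots,(r-1)d+2\}$. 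Since $\delta=b_{\{1,\ldots,n\}}$ is the single-block noncrossing partition, and the decomposition of a simple element into subsimples is unique (Theorem~\ref{thm:simple} and the remark following it), the remaining factors $\tau^{-kd}(b_0)$ and $\tau^{-kd}(a^{**})$ must tile the arcs between consecutive elements of $S$; this pins down both the sector support of $a^{**}$ and, after shifting each surviving factor back to the reference arc $\{2,\ldots,d+1\}$ by $\tau^{-kd}$, the claimed product identity. Simpleness at each regrouping step will be checked with Lemmas~\ref{lem:simp2} and~\ref{lem:sim}. For $d=1$ the argument degenerates: $B_1=\{1,2\}$ and $b_0=1$, and a length count against the periodicity equation forces $a^{**}=1$, so $\alpha=\delta\,a_{21}=\varepsilon$.

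For (iii), the element $c=\tau^{-d}(b_1\cdots b_{r-1})\,\tau^{-2d}(b_2\cdots b_{r-1})\cdots\tau^{-(r-1)d}(b_{r-1})$ is a product of simples whose supports are nested arcs of the form $\{jd+2,\ldots,n\}$, so Lemmas~\ref{lem:simp2} and~\ref{lem:sim} yield that $c$ is itself simple. Using $\gamma\delta=\delta\tau(\gamma)$ I get $c^{-1}\alpha c=\delta^d\,\tau^{d}(c)^{-1}\,a\,c$, and then the product identity from stage~(ii) causes the interior $\tau^{d}(c)^{-1}\,a\,c$ to collapse telescopically to $a_{d+1,1}\,b_{\{2,\ldots,d+1\}}=b_{\{1,\ldots,d+1\}}$. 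A short induction on $d$ using $\varepsilon^{d+1}=\varepsilon\cdot\varepsilon^d$ and $\sigma_1\delta=\delta\,\tau(\sigma_1)=\delta\,a_{32}$ establishes $\varepsilon^d=\delta^d b_{\{1,\ldots,d+1\}}$, completing the proof that $c^{-1}\alpha c=\varepsilon^d$.
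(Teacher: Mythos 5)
Your proposal does not prove the statement in question. The statement is Theorem~\ref{thm:e^d}: part (1) asserts that the correspondence $\alpha\mapsto\alpha'=\delta^{np}\alpha^q$ reduces the conjugacy search problem for $\varepsilon^k$ to that for $\varepsilon^d$ with the \emph{same} conjugating element $\gamma$, and part (2) asserts three structural facts about $[\varepsilon^d]^S$, namely $[\varepsilon^d]^S=[\varepsilon^d]^U=[\varepsilon^d]^{St}$, closure under partial cyclings, and that every element has the form $\delta^d a$ with $a\,\tau^{-d}(a)\cdots\tau^{-(r-1)d}(a)=\delta$. What you actually outline --- $1$-purity forcing $a_{d+1,1}\preccurlyeq a$, the sector decomposition $a=a_{d+1,1}b_0b_1\cdots b_{r-1}$, and the verification that the explicit simple element $c$ conjugates $\alpha$ to $\varepsilon^d$ --- is a proof sketch of Theorem~\ref{thm:main1}, a different theorem. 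Worse, relative to the target your argument is circular: your stage (ii) explicitly takes ``the periodicity identity $a\,\tau^{-d}(a)\cdots\tau^{-(r-1)d}(a)=\delta$ from Theorem~\ref{thm:e^d}(2c)'' as an input, i.e.\ you assume part (2c) of the very statement to be proved, and parts (1), (2a) and (2b) are never touched at all.

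For comparison, the paper does not prove Theorem~\ref{thm:e^d} internally either; it is imported wholesale from Lemmas 3.18, 3.21 and Theorem 3.24 of \cite{LL11}. A self-contained proof would require tools of a different nature from your noncrossing-partition combinatorics: part (1) rests on the uniqueness, up to conjugacy, of roots of central (periodic) braids, so that $\varepsilon^k$ and $\varepsilon^d$ generate the same cyclic subgroup of $B_n/\langle\Delta^2\rangle$, combined with the centrality of $\delta^{np}$ --- this gives the mutually inverse maps $\alpha\mapsto\delta^{np}\alpha^q$ and $\beta\mapsto\beta^{k/d}$ exactly as in the proof of Proposition~\ref{thm:St}, and in particular yields the ``moreover'' clause because the same $\gamma$ works for $\alpha$ and $\alpha'$. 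Part (2) depends on the general theory of periodic elements in Garside groups, e.g.\ $\operatorname{len}_s(\alpha)\in\{0,1\}$ for periodic $\alpha$ (Lemma~\ref{lem:perlen}, which already gives $[\varepsilon^d]^S=[\varepsilon^d]^U$), plus stability and partial-cycling arguments from \cite{LL11} for (2a), (2b), and the factorization identity in (2c). None of your stages (i)--(iii) addresses any of this; they belong to the proof of Theorem~\ref{thm:main1}, which sits logically \emph{downstream} of Theorem~\ref{thm:e^d}.
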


\section{The super summit set of $\varepsilon^d$}

Throughout this section, we assume that the braid group $B_n$
is endowed with the dual Garside structure and that
$1\leqslant d<n-1$ is a divisor of $n-1$, hence $n=dr+1$ for some $r\geqslant 2$.

\begin{definition}
For a braid $\alpha\in B_n$, let $\pi_\alpha$ denote the induced
permutation of $\alpha$.
For $1\leqslant i\leqslant n$, a braid $\alpha\in B_n$ is called \emph{$i$-pure}
if $\pi_\alpha(i)=i$.
\end{definition}

For example, $\pi_\delta(i)=i-1\pmod n$ for $1\leqslant i\leqslant n$.
A noncentral $\varepsilon^k$ is $i$-pure
if and only if $i=1$.

\subsection{Typical super summit elements of $\varepsilon^d$}
\label{sec:example}

Let us consider typical elements of $[\varepsilon^d]^S$.
Recall that $n=rd+1$ with $r\geqslant 2$ and $d\geqslant 1$.
Suppose $d\ne 1$.

\begin{enumerate}
\item
Notice that $\varepsilon^d=\delta^d [d+1,\ldots,2,1]=\delta^d a_{d+1,1}[d+1,\ldots,2]$.
Let $c_0,c_1,\ldots,c_{r-1}$ be simple elements such that
$$[d+1,\ldots,2]=c_0c_1\cdots c_{r-1}.$$
See Figure~\ref{fig:e3}, where $n=13$, $d=3$ and $r=4$.
In this case, the simple elements $c_i$ are 13-braids supported on the set $\{2,3,4\}$.
Each {\fboxrule=.7pt\raisebox{3pt}{\fbox{$c_i$}}} in Figure~\ref{fig:e3}
(and also in Figures~\ref{fig:e3SS} and~\ref{fig:red} later)
means the 3-braid obtained from $c_i$ by deleting
all the $j$-th strands for $j\not\in\{2,3,4\}$.

\begin{figure}
\begin{tabular}{ccccc}
\includegraphics[scale=.9]{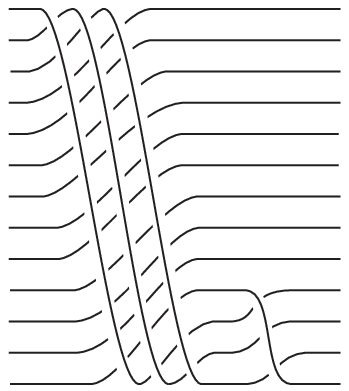}&\qquad&
\includegraphics[scale=.9]{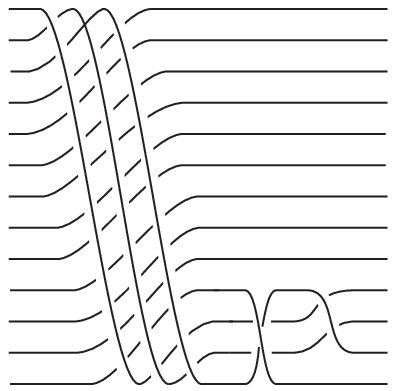}&\qquad&
\includegraphics[scale=.9]{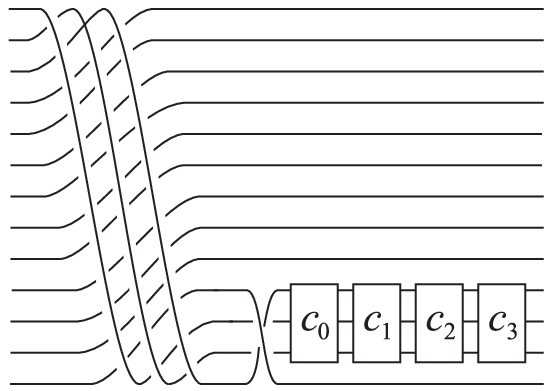}\\
(a) $\varepsilon^3=\delta^3[4,3,2,1]$ &&
(b) $\varepsilon^3=\delta^3a_{41}[4,3,2]$ &&
(c) $\varepsilon^3=\delta^3a_{41} c_0c_1c_2c_3$
\end{tabular}
\caption{Braid diagrams of $\varepsilon^3$ in $B_{13}$}
\label{fig:e3}
\end{figure}

\item
$\varepsilon^d=\delta^d a_{d+1,1}c_0c_1\cdots c_{r-1}$ is conjugate to
$$\alpha=\delta^d a_{d+1,1} b_0b_1\cdots b_{r-1},$$
where $b_k=\tau^{kd}(c_k)$ for $0\leqslant k\leqslant r-1$.
See Figure~\ref{fig:e3SS}(a).
If we let
$$
x=\tau^{-d}(b_1b_2\cdots b_{r-1})\tau^{-2d}(b_2b_3\cdots b_{r-1})\cdots
\tau^{-(r-1)d}(b_{r-1}),
$$
then $x^{-1}\alpha x=\varepsilon^d$.
Note that each $b_k$ is supported on $\{kd+2,\ldots,kd+(d+1)\}$.
For an example, see Figure~\ref{fig:e3SS}(b), where
$c_0=[3,2]$, $c_1=c_3=1$, $c_2=[4,2]$, hence
$b_0=[3,2]$, $b_1=b_3=1$, $b_2=[10,8]$.

\begin{figure}
\begin{tabular}{ccc}
\includegraphics[scale=.9]{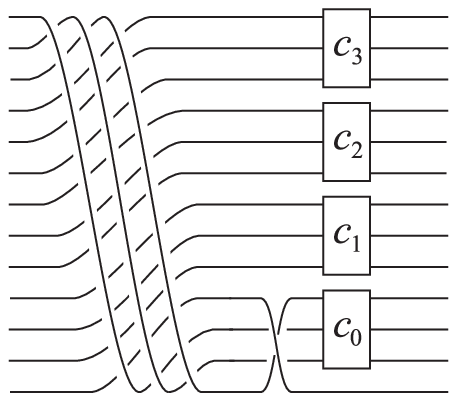}&\qquad&
\includegraphics[scale=.9]{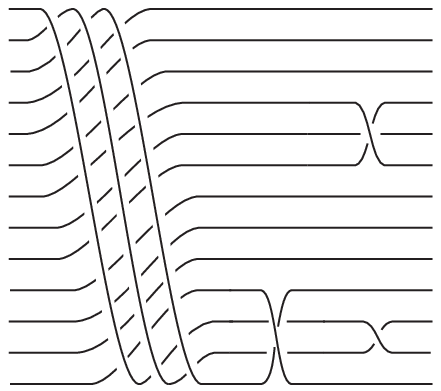}\\
(a) A 1-pure braid in $[\varepsilon^3]^S$&&
(b) $\delta^3 a_{41}[3,2][10,8]$\\
\end{tabular}
\caption{Elements of $[\varepsilon^3]^S$ in $B_{13}$.}
\label{fig:e3SS}
\end{figure}

\item Notice that $\alpha\in[\varepsilon^d]^S$ because
$\operatorname{len}(\alpha)=1= \operatorname{len}_s (\varepsilon^d)$.
\end{enumerate}

\subsection{Super summit elements of $\varepsilon^d$}

In this subsection, we give a characterization of the super summit
elements of $\varepsilon^d$.
For $0\leqslant k\leqslant r-1$, define
$$S_k=kd+\{2,3,\ldots,d+1\}=\{kd+2,kd+3,\ldots,kd+(d+1)\}.$$
Then $\{\{1\}, S_0, S_1,\ldots, S_{r-1}\}$ is a partition of $\{1,\ldots,n\}$.
The braid $\alpha$ in \S\ref{sec:example}
is 1-pure such that each $b_k$ is supported on $S_k$.

The following is the main theorem of this paper,
which shows that every 1-pure super summit element of $\varepsilon^d$
can be obtained using the construction in \S\ref{sec:example}.

\begin{theorem}\label{thm:main1}
Let $B_n$ be endowed with the dual Garside structure.
Let $n=rd+1$ for integers $r\geqslant 2$ and $d\geqslant 1$.
Let $\alpha$ be a 1-pure braid in the super summit set of $\varepsilon^d$ in $B_n$.
If $d=1$, then $\alpha=\varepsilon$.
If $d\geqslant 2$, then
$\alpha=\delta^d a$ for a simple element $a$ of the form
$a=a_{d+1,1}b_0 b_1 \cdots b_{r-1}$, where
\begin{enumerate}
\item each $b_k$ is supported on $kd+\{2,3,\ldots,d+1\}$ for $0\leqslant k\leqslant r-1$, and

\item $b_0 \tau^{-d}(b_1)\tau^{-2d}(b_2)
\cdots\tau^{-(r-1)d}(b_{r-1})=[d+1,\ldots,3,2]$.
\end{enumerate}
In this case, $c^{-1}\alpha c=\varepsilon^d$
for a simple element $c$ given by
$$c=\tau^{-d}(b_1b_2\cdots b_{r-1})\tau^{-2d}(b_2b_3\cdots b_{r-1})\cdots
\tau^{-(r-1)d}(b_{r-1}).$$
\end{theorem}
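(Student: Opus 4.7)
The plan is to combine Theorem~\ref{thm:e^d}(2)(c) with the 1-pure hypothesis and the noncrossing partition lattice. From Theorem~\ref{thm:e^d}(2)(c) I start with
$$\alpha=\delta^d a,\quad a\in(1,\delta),\quad a\,\tau^{-d}(a)\,\tau^{-2d}(a)\cdots\tau^{-(r-1)d}(a)=\delta.$$
Comparing exponent sums (using that $\tau$ preserves them and that the exponent sum of $\delta$ is $n-1=rd$) gives exponent sum $d$ for $a$, so by Theorem~\ref{thm:simple} the noncrossing partition $\mathcal{P}$ underlying $a$ has exactly $n-d$ blocks. The $d=1$ case is immediate: $a$ must be a single band generator, and the 1-pure condition (rewritten via $\pi_\delta(i)=i-1\pmod n$ as $\pi_a(1)=2$) forces $a=a_{21}=\sigma_1$, so $\alpha=\varepsilon$.

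For $d\geqslant 2$ the 1-pure condition reads $\pi_a(1)=d+1$. Writing the block $T\in\mathcal{P}$ containing $1$ as $\{t_1<\cdots<t_k\}$, the cycle formula $\pi_{b_T}(t_1)=t_k$ forces $t_1=1$ and $t_k=d+1$, so $T\subseteq\{1,\ldots,d+1\}$. By Lemma~\ref{lem:subsim} this gives $a_{d+1,1}\preccurlyeq a$, so I factor $a=a_{d+1,1}\cdot a^*$ with $a^*$ simple; using the BKL identity $[d+1,t_{k-1},\ldots,t_2,1]=a_{d+1,1}\cdot[d+1,t_{k-1},\ldots,t_2]$ to strip the chord $a_{d+1,1}$ out of $b_T$, the partition of $a^*$ replaces $T$ by $T\setminus\{1\}\subseteq S_0$. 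Noncrossing-ness with $T$ confines every remaining block of $\mathcal{P}$ to $\{2,\ldots,d\}\cup\{d+2,\ldots,n\}$, so $a^*$ is supported on $\{2,\ldots,n\}$.

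The heart of the argument is to show that every block of $\mathcal{P}$ lying in $\{d+2,\ldots,n\}$ sits inside a single $S_k$, so that $a^*=b_0 b_1\cdots b_{r-1}$ with $b_k$ supported on $S_k$. I would exploit that, since the full product equals $\delta$ with all factors positive, every partial product $a\,\tau^{-d}(a)\cdots\tau^{-kd}(a)$ divides $\delta$ and is therefore simple. A block straddling two consecutive $S_k$'s would, after enough $\tau^{-d}$-shifts, cross one of the previously accumulated blocks and violate simplicity of some partial product. Once the $S_k$-restriction is proved, condition~(2) is read off by restricting the symmetry equation to $S_0$, using that the $\tau^{-kd}(b_k)$'s have mutually disjoint supports inside $S_0$. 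Finally, $c^{-1}\alpha c=\varepsilon^d$ is a direct computation: pushing each $\tau^{-kd}(b_k)$ across $\delta^d$ via $\delta^d x=\tau^{-d}(x)\delta^d$ and using disjointness of supports regroups the factors into $\delta^d a_{d+1,1}[d+1,\ldots,2]=\varepsilon^d$, while simplicity of $c$ follows from Lemma~\ref{lem:sim} applied to its factors on disjoint $S_k$'s. The main obstacle is exactly the $S_k$-restriction of the blocks: neither noncrossing-ness nor the product equation alone is strong enough, and the delicate task is turning the statement ``every partial $\tau^{-d}$-shift product is simple'' into the required rigid grid structure.
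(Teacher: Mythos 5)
Your skeleton matches the paper's proof (start from Theorem~\ref{thm:e^d}(2)(c), extract $a_{d+1,1}\preccurlyeq a$ from 1-purity, confine the remaining blocks to the sets $S_k$, then verify the conjugation by pushing factors across $\delta^d$), but the crux is exactly where you stop, and your sketch of it would not close as stated. Simplicity of the prefix products $a\,\tau^{-d}(a)\cdots\tau^{-kd}(a)\preccurlyeq\delta$ is true but too weak: a straddling chord $a_{ij}\preccurlyeq a'$ must be played off against the shifted copies of the \emph{single chord} $a_{d+1,1}$, not against ``previously accumulated blocks'' of $a'$ at large, and with the negative shifts $\tau^{-kd}$ these copies wrap around modulo $n$ (e.g.\ $\tau^{-d}(a_{d+1,1})=a_{1,(r-1)d+2}$), so no clean crossing appears inside any one prefix. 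The paper's resolution is a sharper use of Lemma~\ref{lem:defrag}: apply $\tau^{(r-1)d}$ to the equation to get $\tau^{(r-1)d}(a)\cdots\tau^{d}(a)\,a=\delta$ with all shifts positive, delete every copy of $a'$ except the last, and conclude that $[rd+1,(r-1)d+1,\ldots,d+1,1]\,a'$ is simple (the shifted chords concatenate into this ``spine''); then any $a_{ij}\preccurlyeq a'$ with $j\leqslant kd+1<i\leqslant k'd+1$ fails Lemma~\ref{lem:simp2} against the spine chord $a_{k'd+1,kd+1}$, a contradiction. That deletion-and-concatenation step is the missing idea; you correctly diagnosed that noncrossingness plus prefix-simplicity alone is not strong enough.

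Two of your auxiliary justifications are also wrong as stated. First, the elements $\tau^{-kd}(b_k)$ are all supported \emph{inside} $S_0=\{2,\ldots,d+1\}$ but their supports are generally not mutually disjoint --- in the paper's own example $b_0=[3,2]$ and $\tau^{-6}(b_2)=[4,2]$ share the puncture $2$ --- so condition (2) cannot be ``read off by restricting the symmetry equation.'' The paper instead obtains it as a byproduct of the conjugation computation: iterated partial commutations give $c^{-1}\alpha c=\delta^d b$ with $b=a_{d+1,1}b_0\tau^{-d}(b_1)\cdots\tau^{-(r-1)d}(b_{r-1})$ simple and supported on $\{1,\ldots,d+1\}$, whence $b\preccurlyeq[d+1,\ldots,2,1]$ by Lemma~\ref{lem:subsim}, and equality follows by comparing exponent sums; this proves condition (2) and $c^{-1}\alpha c=\varepsilon^d$ simultaneously. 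Second, simplicity of $c$ does not follow from Lemma~\ref{lem:sim} ``applied to factors on disjoint $S_k$'s'': the factors are not disjointly supported (e.g.\ $\tau^{-d}(b_1)$ and $\tau^{-2d}(b_2)$ both live on $S_0$), and pairs of band generators within one $S_k$ need not be simple. Here too the paper uses Lemma~\ref{lem:defrag}, observing $b_kb_{k+1}\cdots b_{r-1}\preccurlyeq a$ and deleting positive factors from the product $a\,\tau^{-d}(a)\cdots\tau^{-(r-1)d}(a)=\delta$. Your opening steps ($\pi_a(1)=d+1$ forcing the block through $1$ and $d+1$, the $d=1$ case by exponent sum, and the push-across-$\delta^d$ computation) are sound and agree with the paper.
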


\begin{proof}
By Theorem~\ref{thm:e^d}, $\alpha=\delta^d a$ for some $a\in(1,\delta)$.

\begin{claim}{Claim 1}
$a_{d+1,1}\preccurlyeq a$
\end{claim}

\begin{proof}[Proof of Claim 1]
Since $\alpha=\delta^d a$ is 1-pure and $1=\pi_{\alpha}(1)=(\pi_{\delta^d}\circ\pi_a)(1)
=\pi_{a}(1)-d$, we have $\pi_a(1)=d+1$.
Hence there exists a subsimple $b$ of $a$ of the form
$b=[d+1,i_k,\ldots,i_1,1]$.
Since $[d+1,i_k,\ldots,i_1,1] = a_{d+1,1} [d+1,i_k,\ldots,i_1]$,
we have $a_{d+1,1}\preccurlyeq b\preccurlyeq a$.
\end{proof}

By the above claim, $a=a_{d+1,1}a'$ for a simple element $a'$.

If $d=1$, then $\alpha=\varepsilon$
because $\varepsilon=\delta a_{21}$
and $\alpha=\delta a_{21}a'$ have the same exponent sum.

Now we assume $d\geqslant 2$.

\begin{claim}{Claim 2}
$[rd+1, (r-1)d+1, \ldots,d+1,1]a'$ is a simple element.
\end{claim}

\begin{proof}[Proof of Claim 2]
By Theorem~\ref{thm:e^d}, $a\tau^{-d}(a)\cdots\tau^{-d(r-1)}(a)=\delta$.
Applying $\tau^{(r-1)d}$ to both sides, we have
$$\tau^{(r-1)d}(a)\cdots\tau^d(a)a=\delta.$$
Because $\tau^k(a)=\tau^k(a_{d+1,1})\tau^k(a')$ for each $k$,
the braid
$\tau^{(r-1)d} (a_{d+1,1})\cdots\tau^d(a_{d+1,1})a_{d+1,1} a'$
is a simple element by Lemma~\ref{lem:defrag}.
Since
\begin{align*}
\tau^{(r-1)d} & (a_{d+1,1})\cdots\tau^d(a_{d+1,1})a_{d+1,1}\\
&= a_{rd+1,(r-1)d+1} \cdots a_{2d+1,d+1} a_{d+1,1}\\
&= [rd+1,(r-1)d+1, \ldots,d+1,1],
\end{align*}
we are done.
\end{proof}

\begin{claim}{Claim 3}
Each subsimple of $a'$ is supported on $S_k$ for some $0\leqslant k\leqslant r-1$.
\end{claim}

\begin{proof}[Proof of Claim 3]
Assume not. By Lemma~\ref{lem:subsim},
there is a generator $a_{ij}\preccurlyeq a'$
such that $\{i,j\}$ is not contained in any $S_k$.
Then there exist $0\leqslant k<k'\leqslant r$ such that
$$j\leqslant kd+1<i\leqslant k'd+1.$$
There are four cases:
(a) $j<kd+1<i<k'd+1$;
(b) $j=kd+1<i=k'd+1$;
(c) $j=kd+1<i<k'd+1$;
(d) $j<kd+1<i=k'd+1$.
In any case,
$a_{k'd+1,kd+1}a_{ij}$ is not a simple element
by Lemma~\ref{lem:simp2}.
See Figure~\ref{fig:cdk}.

On the other hand, $a_{k'd+1,kd+1}\preccurlyeq [rd+1,(r-1)d+1,\ldots,d+1,1]$
by Lemma~\ref{lem:subsim}. Hence $a_{k'd+1,kd+1}a_{ij}$ is a simple element
by Claim 2 and Lemma~\ref{lem:defrag},
which is a contradiction.
\end{proof}

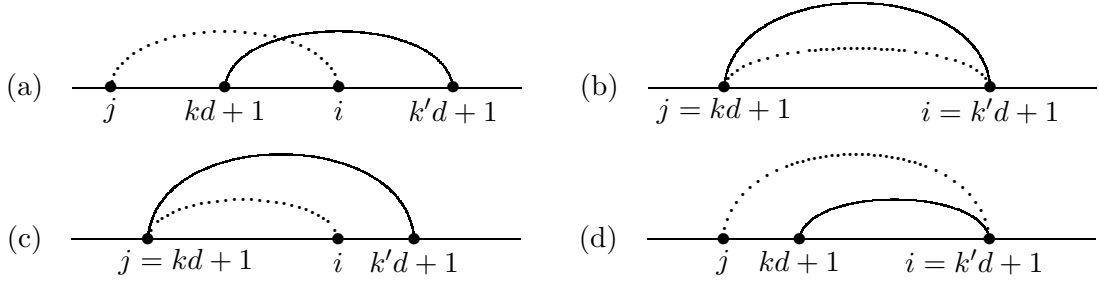
\begin{figure}
(a)\quad
$\begin{xy}
(0,0); (59,0) **@{-};
( 5,0); (35,0) **\crv{~*=<3pt>{.}  ( 6,10) & (34,10)} ?(0.65) ;
(20,0); (50,0) **\crv{(21,10) & (49,10)} ?(0.65);
( 5,0) *{\bullet};
(20,0) *{\bullet};
(35,0) *{\bullet};
(50,0) *{\bullet};
(5,-3) *{j};
(20,-3) *{kd+1};
(35,-3) *{i};
(50,-3) *{k'd+1};
\end{xy}$\qquad
(b)\quad
$\begin{xy}
(0,0); (59,0) **@{-};
(10,0); (45,0) **\crv{(11,15) & (44,15)} ?(0.65) ;
(10,0); (45,0) **\crv{~*=<3pt>{.}  (11,7) & (44,7)} ?(0.65);
(10,0) *{\bullet};
(45,0) *{\bullet};
(10,-3) *{j=kd+1};
(45,-3) *{i=k'd+1};
\end{xy}$

(c)\quad
$\begin{xy}
(0,0); (59,0) **@{-};
(10,0); (45,0) **\crv{(11,15) & (44,15)} ?(0.65) ;
(10,0); (35,0) **\crv{~*=<3pt>{.}  (11,7) & (34,7)} ?(0.65);
(10,0) *{\bullet};
(35,0) *{\bullet};
(45,0) *{\bullet};
(15,-3) *{j=kd+1};
(35,-3) *{i};
(45,-3) *{k'd+1};
\end{xy}$\qquad
(d)\quad
$\begin{xy}
(0,0); (59,0) **@{-};
(10,0); (45,0) **\crv{~*=<3pt>{.} (11,15) & (44,15)} ?(0.65) ;
(20,0); (45,0) **\crv{(21,7) & (44,7)} ?(0.65);
(10,0) *{\bullet};
(20,0) *{\bullet};
(45,0) *{\bullet};
(10,-3) *{j};
(20,-3) *{kd+1};
(43,-3) *{i=k'd+1};
\end{xy}$

\caption{$a_{k'd+1,kd+1} a_{ij}$ is not a simple element
in these four cases.}
\label{fig:cdk}
\end{figure}

For $0\leqslant k\leqslant r-1$, let $b_k$ be the product of
subsimples of $a'$ which are supported on $S_k$.
(Hence each $b_k$ is a simple element supported on $S_k$.)
By the above claim, we have
$$a=a_{d+1,1}b_0 b_1 \cdots b_{r-1}.$$

\begin{claim}{Claim 4}
$c=\tau^{-d}(b_1b_2\cdots b_{r-1})\tau^{-2d}(b_2b_3\cdots b_{r-1})\cdots
\tau^{-(r-1)d}(b_{r-1})$ is a simple element.
\end{claim}

\begin{proof}[Proof of Claim 4]
In the proof of Claim 2,
we have seen $a\tau^{-d}(a)\cdots\tau^{-(r-1)d}(a)=\delta$.
Then $c$ is a simple element by Lemma~\ref{lem:defrag} because $b_kb_{k+1}\cdots b_{r-1}\preccurlyeq a$
for $1\leqslant k\leqslant r-1$.
\end{proof}

We remark that $a_{d+1,1} b_0\tau^{-d}(b_1)\cdots\tau^{-kd}(b_k)$
is a simple element for $0\leqslant k\leqslant r-1$ by the same reason as above.

\begin{claim}{Claim 5}
$a_{d+1,1}b_0 \tau^{-d}(b_1)\tau^{-2d}(b_2)
\cdots\tau^{-(r-1)d}(b_{r-1})=[d+1,\ldots,2,1]$ and
$c^{-1}\alpha c=\varepsilon^d$.
\end{claim}

\begin{proof}[Proof of Claim 5]
Let $x_k=\tau^{-kd}(b_k b_{k+1}\cdots b_{r-1})$
and $\alpha_k=(x_1\cdots x_k)^{-1} \alpha (x_1\cdots x_k)$
for $1\leqslant k\leqslant r-1$.
Then $c=x_1x_2\cdots x_{r-1}$.

Since each $b_k$ is supported on $S_k$ for $1\leqslant k\leqslant r-1$,
it commutes with $a_{d+1,1}b_0$.
Since
\begin{align*}
\alpha &= \delta^d (a_{d+1,1}b_0)\cdot (b_1\cdots b_{r-1})
 = \delta^d (b_1\cdots b_{r-1}) \cdot (a_{d+1,1}b_0)\\
 &= \tau^{-d}(b_1\cdots b_{r-1})\cdot \delta^d a_{d+1,1}b_0
 = x_1 \cdot \delta^d a_{d+1,1}b_0,
\end{align*}
we have
\begin{align*}
\alpha_1 &=x_1^{-1}\alpha x_1 = \delta^d a_{d+1,1}b_0 x_1
 = \delta^d a_{d+1,1}b_0 \tau^{-d}(b_1\cdots b_{r-1})\\
 &=\delta^d a_{d+1,1}b_0\tau^{-d}(b_1)\cdot \tau^{-d}(b_2\cdots b_{r-1}).
\end{align*}
Notice that $a_{d+1,1}b_0\tau^{-d}(b_1)$ is supported on $\{1,2,\ldots,d+1\}$
and $\tau^{-d}(b_2\cdots b_{r-1})$ is supported on $S_1\cup\cdots \cup S_{r-2}$.
Therefore they commute.
By a similar computation as above,
$$
\alpha_2=x_2^{-1}\alpha_1x_2=\delta^d a_{d+1,1}b_0\tau^{-d}(b_1)\tau^{-2d}(b_2)\cdot \tau^{-2d}(b_3\cdots b_{r-1}).
$$
Continuing this computation, we obtain
$$
\alpha_{r-1}=x_{r-1}^{-1}\alpha_{r-2}x_{r-1}=\delta^d a_{d+1,1}b_0\tau^{-d}(b_1)\tau^{-2d}(b_2)\cdots \tau^{-(r-1)d}(b_{r-1}).
$$
Write $b=a_{d+1,1}b_0\tau^{-d}(b_1)\tau^{-2d}(b_2)\cdots \tau^{-(r-1)d}(b_{r-1})$.
Notice that $b$ is a simple element supported on $\{1,2,\ldots,d+1\}$,
hence $b\preccurlyeq [d+1,\ldots,2,1]$ by Lemma~\ref{lem:subsim}.
Because both elements have the same exponent sum, they must be equal, i.e.\
$b=[d+1,\ldots,2,1]$.
Hence
$\alpha_{r-1}=\delta^d[d+1,\ldots,2,1]=\varepsilon^d$.
Since $\alpha_{r-1}=(x_{r-1}^{-1}\cdots x_1^{-1})\alpha(x_1\cdots x_{r-1})
=c^{-1}\alpha c$,
we are done.
\end{proof}

Claims 1--5 complete the proof of Theorem~\ref{thm:main1}.
\end{proof}

Every element of $[\varepsilon^d]^S$ can be conjugated
to a 1-pure braid in $[\varepsilon^d]^S$ by $\delta^u$ for some $0\leqslant u\leqslant n-1$.
By Theorem~\ref{thm:main1} any 1-pure braid of $[\varepsilon^d]^S$
can be conjugated to $\varepsilon^d$ by a simple element $c$.
Therefore every element of $[\varepsilon^d]^S$ can be conjugated to
$\varepsilon^d$ by $\delta^u c$
for some $0\leqslant u\leqslant n-1$ and a simple element $c$.

\subsection{The size of the super summit set of $\varepsilon^d$}

Using the characterization of the elements of $[\varepsilon^d]^S$ in
Theorem~\ref{thm:main1},
we can figure out the size of $[\varepsilon^d]^S$.

Let $Z_d(r)$ be the number of multi-chains
$1\preccurlyeq a_1\preccurlyeq a_2\preccurlyeq\cdots\preccurlyeq a_{r-1}\preccurlyeq [d,d-1,\ldots,1]$.
$Z_d(r)$ is called the \emph{zeta polynomial
of the noncrossing partition lattice}
of $\{1,2,\ldots,d\}$~\cite{Ede80}.

\begin{theorem}[Edelman~\cite{Ede80}]
$Z_d(r)={dr\choose d-1}/d$ for $d,r\geqslant 1$.
\end{theorem}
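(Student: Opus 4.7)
The plan is to prove Edelman's formula by translating multi-chains into ordered factorizations of the Garside element $\delta_d=[d,d-1,\ldots,1]$ of $B_d$ and then enumerating the factorizations via a cycle-lemma argument.

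First, I would convert multi-chains into factorizations. Given a multi-chain
$1=a_0\preccurlyeq a_1\preccurlyeq\cdots\preccurlyeq a_{r-1}\preccurlyeq a_r=\delta_d$,
the differences $c_i:=a_{i-1}^{-1}a_i$ are simple elements of $B_d$ by Theorem~\ref{thm:simple}, and they satisfy $c_1c_2\cdots c_r=\delta_d$. Conversely, the partial products $a_i:=c_1\cdots c_i$ recover the multi-chain. Hence $Z_d(r)$ equals the number of ordered $r$-tuples of simple elements of $B_d$ whose product is $\delta_d$, i.e., the number of length-$r$ factorizations of the Coxeter-type element into simple (noncrossing) factors.

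Next, I would encode each such factorization as a cyclic lattice path of length $rd$. Each simple factor corresponds to a noncrossing partition of $\{1,\ldots,d\}$, and since the total product $\delta_d$ induces a single $d$-cycle on the strands, the accumulated ``merge structure'' of the factorization distributes exactly $d-1$ merges across the $r$ levels. I would transport this data onto a cyclic sequence in the alphabet $\{+1,-(r-1)\}$, with $+1$ at each continuation position and $-(r-1)$ at each block-starter position; the factorization condition then becomes a cyclic ballot-type condition on the partial sums of this sequence.

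Finally, I would invoke the Dvoretzky--Motzkin cycle lemma: among the $\binom{rd}{d-1}$ placements of the negative entries in a cyclic sequence of length $rd$, exactly a fraction $1/d$ satisfy the required ballot condition, yielding the count $\binom{rd}{d-1}/d=Z_d(r)$.

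The main obstacle will be the middle step --- setting up the encoding rigorously so that the product identity $c_1\cdots c_r=\delta_d$ corresponds exactly to the cyclic ballot condition, and checking that the resulting map is a bijection. Once this encoding is in place, the initial reduction to factorizations is immediate from Theorem~\ref{thm:simple}, and the final count by the cycle lemma is standard.
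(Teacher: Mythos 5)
The paper itself gives no proof of this statement---it is imported wholesale from Edelman~\cite{Ede80}---so there is no internal argument to compare against, and your outline must be judged on its own terms. Your first reduction is correct, and is exactly the observation the paper records right after the theorem: multi-chains $1\preccurlyeq a_1\preccurlyeq\cdots\preccurlyeq a_{r-1}\preccurlyeq[d,\ldots,1]$ correspond bijectively to $r$-compositions $c_1\cdots c_r=[d,d-1,\ldots,1]$, and since band-generator length is additive on positive factorizations (each simple factor has length $d$ minus its number of blocks), the factors carry $d-1$ merges in total, as you say. The cycle-lemma numerology is also consistent, but only with the labels opposite to the ones you wrote: the $d-1$ continuation (merge) positions must carry the letter $-(r-1)$ and the $rd-d+1$ block-starter positions the letter $+1$, so that the total sum is $(rd-d+1)-(r-1)(d-1)=r$ and Dvoretzky--Motzkin yields the fraction $r/(rd)=1/d$ of the ${rd\choose d-1}$ placements. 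As literally written ($+1$ on continuations, $-(r-1)$ on starters) there are $rd-d+1$ negative letters and the total sum is negative, so the lemma does not apply at all; the clash with your own count of ``${rd\choose d-1}$ placements of the negative entries'' shows the encoding is not yet pinned down.

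That encoding is the genuine gap, and it is not a routine verification: the naive choice---record, level by level, which of the $rd$ positions are block minima of $c_i$, and read the letters in order---already fails at $d=r=2$. The two multi-chains correspond to $(c_1,c_2)=(1,[2,1])$ and $([2,1],1)$, giving the words $(+1,+1,+1,-1)$ and $(+1,-1,+1,+1)$; the second has partial sums $1,0,1,2$ and fails strict positivity, whereas the strict ballot set is $\{(+1,+1,-1,+1),\ (+1,+1,+1,-1)\}$, and weak positivity admits three words, so neither condition carves out the image of the naive map. Injectivity is not free either: a noncrossing partition is not determined by its set of block minima (for $d=4$, the partitions $\{\{1\},\{2,3,4\}\}$, $\{\{2\},\{1,3,4\}\}$ and $\{\{1,4\},\{2,3\}\}$ all have minima $\{1,2\}$), so the product condition $c_1\cdots c_r=[d,\ldots,1]$ must enter essentially even to make the map well defined and injective. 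A proof in exactly your spirit does exist---for instance, let $\mathbb{Z}_d$ act on $(d-1)$-subsets of $\mathbb{Z}_{rd}$ by translation by $r$ (the action is free, since a stabilized subset would have cardinality divisible by a common divisor greater than $1$ of $d$ and $d-1$) and biject the orbits with multi-chains---but constructing that bijection \emph{is} the content of Edelman's theorem. Your sketch leaves precisely this step open, as you yourself flag, so what you have is a plausible plan with correct bookkeeping at both ends, not a proof.
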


An $r$-tuple $(c_0,\ldots,c_{r-1})$ of simple elements in $B_n$ is called
an \emph{$r$-composition} of a simple element $a$ if
$a=c_0\cdots c_{r-1}$.

There is a one-to-one correspondence between the set of $r$-compositions
of $[d,d-1,\ldots,1]$ and the set of multi-chains
$1\preccurlyeq a_1\preccurlyeq a_2\preccurlyeq\cdots\preccurlyeq a_{r-1}\preccurlyeq [d,d-1,\ldots,1]$.
For an $r$-composition $(c_0,\ldots,c_{r-1})$,
let $a_k=c_0c_1\cdots c_{k-1}$ for $1\leqslant k\leqslant r-1$.
Then $(a_1,\ldots,a_{r-1})$ is a multi-chain.
Conversely, for a multi-chain $(a_1,\ldots,a_{r-1})$,
let $c_k=a_k^{-1}a_{k+1}$ for $0\leqslant k\leqslant r-1$
where $a_0=1$ and $a_r=[d,d-1,\ldots,1]$.
Then $(c_0,c_1,\ldots,c_{r-1})$ is an $r$-composition.

\begin{theorem}\label{thm:size}
Let $n=rd+1$ for $r\geqslant 2$ and $d\geqslant 1$.
The cardinality of $[\varepsilon^d]^S$ is
$nZ_d(r)=n{dr\choose d-1}/d=n{n-1\choose d-1}/d$.
\end{theorem}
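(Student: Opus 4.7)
The plan is to count $[\varepsilon^d]^S$ in two stages. Let $X\subseteq[\varepsilon^d]^S$ denote the subset of $1$-pure elements. I first show $|[\varepsilon^d]^S|=n\cdot|X|$, and then that $|X|=Z_d(r)$. For the first equality, note that $\pi_\varepsilon$ fixes $1$ and acts on $\{2,\ldots,n\}$ as an $(n-1)$-cycle; since $d$ is a proper divisor of $n-1$, the permutation $\pi_{\varepsilon^d}$ has $1$ as its unique fixed point, and because conjugate permutations share the same number of fixed points, every $\alpha\in[\varepsilon^d]^S$ has a unique fixed point in $\{1,\ldots,n\}$. Conjugation by $\delta^u$ preserves $[\varepsilon^d]^S$ because it equals $\tau^{-u}$, and $\tau$ preserves the normal form, hence canonical length, infimum and supremum. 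Since $\pi_\delta$ is an $n$-cycle, there is a unique $u\in\{0,\ldots,n-1\}$ making $\delta^u\alpha\delta^{-u}$ $1$-pure; and conversely, for any $\beta\in X$ the $n$ conjugates $\delta^u\beta\delta^{-u}$, $0\le u\le n-1$, have pairwise distinct fixed points and are therefore pairwise distinct. This yields a bijection $X\times\{0,\ldots,n-1\}\to[\varepsilon^d]^S$, hence $|[\varepsilon^d]^S|=n|X|$.

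For the second equality, Theorem~\ref{thm:main1} asserts that each $\beta\in X$ admits a unique presentation $\beta=\delta^d\,a_{d+1,1}\,b_0b_1\cdots b_{r-1}$ with each $b_k$ simple and supported on $S_k$, subject to $b_0\,\tau^{-d}(b_1)\cdots\tau^{-(r-1)d}(b_{r-1})=[d+1,\ldots,3,2]$. Uniqueness of the tuple $(b_0,\ldots,b_{r-1})$ given $\beta$ is immediate, since each $b_k$ is the product of the subsimples of the simple part of $\beta$ whose supports lie in $S_k$, and any tuple satisfying the listed conditions produces an element of $X$: a routine pairwise check with Lemmas~\ref{lem:simp2} and~\ref{lem:sim} shows $a_{d+1,1}b_0\cdots b_{r-1}$ is a simple element, so $\beta$ has canonical length $1=\operatorname{len}_s(\varepsilon^d)$, and Claim~5 in the proof of Theorem~\ref{thm:main1} provides an explicit conjugator from $\beta$ to $\varepsilon^d$. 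Setting $c_k=\tau^{-kd}(b_k)$ yields a simple element supported on $\tau^{-kd}(S_k)=\{2,\ldots,d+1\}$, and the constraint becomes $c_0c_1\cdots c_{r-1}=[d+1,\ldots,3,2]$. Thus $X$ is in bijection with the set of $r$-compositions of $[d+1,\ldots,3,2]$, regarded as the maximal element of the noncrossing partition lattice on the $d$-element set $\{2,\ldots,d+1\}$. The correspondence between $r$-compositions and multi-chains recalled immediately before this theorem gives $|X|=Z_d(r)$, and Edelman's formula together with $n-1=dr$ yields $|[\varepsilon^d]^S|=nZ_d(r)=n\binom{dr}{d-1}/d=n\binom{n-1}{d-1}/d$.

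The only substantive ingredient beyond bookkeeping is the unique-fixed-point argument in the first stage, which relies on the hypothesis $d<n-1$ to ensure that $\pi_{\varepsilon^d}$ has no fixed points other than $1$; everything else follows directly from Theorem~\ref{thm:main1} and the discussion preceding the theorem statement.
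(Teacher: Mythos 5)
Your proof is correct and takes essentially the same route as the paper: first reduce to counting the $1$-pure elements (the paper asserts the factor of $n$ in one line, whereas you justify it via the unique-fixed-point argument and the $\tau$-invariance of the super summit set, a welcome detail), then biject the $1$-pure super summit elements with $r$-compositions of $[d+1,\ldots,2]$ using Theorem~\ref{thm:main1} together with the construction of \S\ref{sec:example}, and conclude by Edelman's formula. The only cosmetic omission is that you invoke the $d\geqslant 2$ form of Theorem~\ref{thm:main1} uniformly, while the trivial case $d=1$ (where the theorem gives $\alpha=\varepsilon$ directly and $Z_1(r)=1$) deserves the separate one-line treatment the paper gives it.
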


\begin{proof}
It suffices to show that $Z_d(r)$ is equal to the number of 1-pure braids in $[\varepsilon^d]^S$
because any element of $[\varepsilon^d]^S$ is uniquely expressed as
$\tau^u(\alpha)$ for $0\leqslant u\leqslant n-1$ and a 1-pure braid $\alpha$ in $[\varepsilon^d]^S$.

If $d=1$, then $Z_d(r)=1$ and $\varepsilon$ is the only 1-pure braid in $[\varepsilon]^S$
by Theorem~\ref{thm:main1}.
Now we assume $d\geqslant 2$.
Notice that there is a one-to-one correspondence between
$r$-compositions of $[d,d-1,\ldots,1]$ and those of $[d+1,d,\ldots,2]$.

Given an $r$-composition $(c_0, c_1,\ldots, c_{r-1})$ of $[d+1,d,\ldots,2]$, let
$$
\alpha=\delta^d a_{d+1,1}b_0b_1\cdots b_{r-1},
$$
where $b_k=\tau^{kd}(c_k)$ for $0\leqslant k\leqslant r-1$.
Then $\alpha$ is a 1-pure braid in $[\varepsilon^d]^S$
as we have seen in \S\ref{sec:example}.
Combining this observation with Theorem~\ref{thm:main1},
we can see that there is a one-to-one correspondence
between the set of 1-pure braids in $[\varepsilon^d]^S$ and the set
of $r$-compositions of $[d,d-1,\ldots,1]$.
Hence $Z_d(r)$ is equal to the number of 1-pure braids in $[\varepsilon^d]^S$.
\end{proof}

\subsection{The stable super summit sets of $\varepsilon^d$ and $\varepsilon^k$}

The CSP for conjugates of $\varepsilon^k$
is equivalent to the CSP for conjugates of $\varepsilon^d$
for $d=\gcd(k, n-1)$. (See Theorem~\ref{thm:e^d}.)
However, their super summit sets look different.
For example, $\varepsilon^k$ does not have such a nice
characterization of the super summit elements as $\varepsilon^d$.

For the study of conjugacy classes of periodic braids, the stable super summit set
looks more natural than the super summit set.
In Proposition~\ref{thm:St} we will see that the stable super summit set
of $\varepsilon^k$ is in one-to-one correspondence with that of $\varepsilon^d$.
Moreover, the correspondence is given by taking powers and by multiplying
by central elements.
In Corollary~\ref{thm:redu} we will see that all the 1-pure braids
in $[\varepsilon^k]^{St}$ have a common reduction system when $d\geqslant 2$.

\begin{proposition}\label{thm:St}
For an integer $k$, let $d=\gcd(k,n-1)$ and $(n-1)p+kq=d$.
Let $f:[\varepsilon^d]\to [\varepsilon^k]$ and $g:[\varepsilon^k]\to [\varepsilon^d]$
be the maps defined by
$$
f(\alpha)=\alpha^{k/d}\quad\mbox{and}\quad g(\beta)=\delta^{np}\beta^q,
$$
where $[\varepsilon^d]$ and $[\varepsilon^k]$ denote
the conjugacy classes of $\varepsilon^d$ and $\varepsilon^k$, respectively.

Then $f$ and $g$ are inverses to each other such that
$f([\varepsilon^d]^{St})=[\varepsilon^k]^{St}$
and
$g([\varepsilon^k]^{St})=[\varepsilon^d]^{St}$.
In particular,
$$ \#[\varepsilon^d]^S
=\#[\varepsilon^d]^{St}
=\#[\varepsilon^k]^{St}
\leqslant \#[\varepsilon^k]^S.
$$
\end{proposition}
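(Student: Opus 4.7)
The plan is to prove well-definedness and mutual inverseness of $f$ and $g$ as maps between $[\varepsilon^d]$ and $[\varepsilon^k]$, then show each restricts to a map on stable super summit sets, and finally read off the cardinality comparison from Theorem~\ref{thm:e^d}(2)(a).

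For well-definedness, note that $d\mid k$ gives $f(\alpha)=\alpha^{k/d}\in[(\varepsilon^d)^{k/d}]=[\varepsilon^k]$, while the centrality of $\delta^n=\varepsilon^{n-1}$ together with $(n-1)p+kq=d$ gives $g(\beta)=\delta^{np}\beta^q\in[\delta^{np}\varepsilon^{kq}]=[\varepsilon^d]$. For the mutual-inverse property, the key observation is that any $\alpha\in[\varepsilon^d]$ satisfies $\alpha^{(n-1)/d}=\delta^n$, because $(\varepsilon^d)^{(n-1)/d}=\delta^n$ is central and any conjugate of a central element equals itself. Hence $\delta^{np}=\alpha^{(n-1)p/d}$, and I would compute
\[
g(f(\alpha))=\delta^{np}\alpha^{kq/d}=\alpha^{((n-1)p+kq)/d}=\alpha.
\]
The symmetric identity $\beta^{(n-1)/d}=\delta^{nk/d}$ for $\beta\in[\varepsilon^k]$, together with the centrality of $\delta^{np}$ (which lets it pull out of the $(k/d)$-th power), yields $f(g(\beta))=\beta$ by the same rearrangement.

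For the stable-summit restriction I would handle the two directions separately. If $\alpha\in[\varepsilon^d]^{St}$, then by definition $\alpha^j\in[\varepsilon^{dj}]^S$ for every $j\in\mathbb{Z}$; specializing $j=(k/d)m$ gives $(f(\alpha))^m=\alpha^{(k/d)m}\in[\varepsilon^{km}]^S$ for all $m$, so $f(\alpha)\in[\varepsilon^k]^{St}$. For the $g$ direction I would use the general fact that multiplication by a central element $z$ shifts both the infimum and the supremum by the same amount and induces a bijection $[\gamma]\to[z\gamma]$, so it preserves super summit sets. Applying this with $z=\delta^{npm}$ and to $\beta^{qm}$, which lies in $[\varepsilon^{kqm}]^S$ by the hypothesis $\beta\in[\varepsilon^k]^{St}$, one gets $(g(\beta))^m=\delta^{npm}\beta^{qm}\in[\varepsilon^{dm}]^S$, hence $g(\beta)\in[\varepsilon^d]^{St}$.

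The cardinality chain is then immediate: $\#[\varepsilon^d]^S=\#[\varepsilon^d]^{St}$ by Theorem~\ref{thm:e^d}(2)(a), $\#[\varepsilon^d]^{St}=\#[\varepsilon^k]^{St}$ from the bijection, and $\#[\varepsilon^k]^{St}\leqslant\#[\varepsilon^k]^S$ is trivial. The only step requiring a little care is the preservation of super summit sets under multiplication by a central element; all the exponent arithmetic is forced by the relation $(n-1)p+kq=d$ and the equalities $\alpha^{(n-1)/d}=\delta^n$, $\beta^{(n-1)/d}=\delta^{nk/d}$.
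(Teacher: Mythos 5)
Your proposal is correct and follows essentially the same route as the paper: the same inverse computations via $\alpha^{(n-1)/d}=\delta^n$ and $\beta^{(n-1)/d}=\delta^{nk/d}$, and the same two structural facts (powers and multiplication by central powers of $\delta$ respect super summit membership) that the paper invokes as a ``notice'' before upgrading the inclusions to equalities via bijectivity. Your only deviation is cosmetic --- you unwind the definition of $[\cdot]^{St}$ directly rather than citing the facts at the level of stable sets --- so there is nothing substantive to flag.
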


\begin{proof}
First, observe that $f$ and $g$ are inverses to each other.
Let $\alpha$ be conjugate to $\varepsilon^d$.
Then $\alpha^{(n-1)/d}=\delta^n$
because $\alpha^{(n-1)/d}$ is conjugate to $(\varepsilon^d)^{(n-1)/d}=\varepsilon^{n-1}=\delta^n$
and $\delta^n$ is central. Hence
$$
g(f(\alpha))=\delta^{np}(\alpha^{\frac kd})^{q}
=\alpha^{\frac{p(n-1)}d}\alpha^{\frac{kq}d}
=\alpha^{\frac{(n-1)p+kq}d}
=\alpha.
$$

Let $\beta$ be conjugate to $\varepsilon^k$.
Then $\beta^{(n-1)/d}=\delta^{kn/d}$ because
$\beta^{(n-1)/d}$ is conjugate to
$\varepsilon^{k(n-1)/d}=(\varepsilon^{n-1})^{k/d}=(\delta^n)^{k/d}$
and $\delta^n$ is central. Hence
$$
f(g(\beta))
=(\delta^{np}\beta^q)^{\frac kd}
=\delta^{\frac{knp}d}\beta^{\frac{kq}d}
=\beta^{\frac{(n-1)p}d}\beta^{\frac{kq}d}
=\beta^{\frac{(n-1)p+kq}d}
=\beta.
$$

Notice that, for $\alpha\in B_n$,  if $\beta\in[\alpha]^{St}$,
then $\beta^m\in[\alpha^m]^{St}$ and $\delta^{mn}\beta\in[\delta^{mn}\alpha]^{St}$
for every $m\in{\mathbb Z}$ .
Thus $f([\varepsilon^d]^{St})\subseteq [\varepsilon^k]^{St}$
and $g([\varepsilon^k]^{St})\subseteq [\varepsilon^d]^{St}$.
Then it follows that  $f([\varepsilon^d]^{St})=[\varepsilon^k]^{St}$
and $g([\varepsilon^k]^{St})=[\varepsilon^d]^{St}$ because
$f\circ g$ and $g\circ f$ are the identity functions on the conjugacy classes
$[\varepsilon^k]$ and $[\varepsilon^d]$, respectively.
In particular, $\#[\varepsilon^d]^{St}=\#[\varepsilon^k]^{St}$.
Because $[\varepsilon^d]^S=[\varepsilon^d]^{St}$ by Theorem~\ref{thm:e^d}
and $[\varepsilon^k]^{St}\subseteq [\varepsilon^k]^S$ by definition,
we are done.
\end{proof}

\begin{example}
We give an example showing that $[\varepsilon^k]^S$ can be strictly larger than
$[\varepsilon^k]^{St}$.
Consider $\varepsilon=\delta[2,1]$ and
$\varepsilon^2=\delta^2[3,2,1]=\delta^2[2,1][3,1]$ in $B_6$.
Let
$$\alpha=[3,1]\varepsilon^2 [3,1]^{-1}=\delta^2[5,3][2,1].$$
Then $\alpha\in[\varepsilon^2]^S$ as 
$\operatorname{len}(\alpha)=1=\operatorname{len}_s (\varepsilon^2)$.
But $\alpha^2=\delta^4[5,4,3,1]\cdot[2,1]\not\in[\varepsilon^4]^S$ as
$\operatorname{len}(\alpha^2)=2\ne 1= \operatorname{len}_s (\varepsilon^4)$.
Therefore $\alpha\in [\varepsilon^2]^S\setminus [\varepsilon^2]^{St}$.
\end{example}

Let $D_n\subseteq \mathbb C$ be the closed disk of radius $n+1$, centered at the origin,
with punctures at $\{1,2,\ldots, n\}$.
The $n$-braids are identified with the mapping classes of $D_n$.
A braid $\alpha$ is called \emph{reducible} if it preserves setwise a family of
nondegenerate simple closed curves, called a \emph{reduction system} for $\alpha$.
A reduction system is called \emph{round}
if each component is homotopic to a geometric circle
in $D_n$.

If $d=\gcd(k,n-1)\geqslant 2$, then the periodic braids $\varepsilon^d$ and $\varepsilon^k$
are reducible with a round reduction system.
See Figure~\ref{fig:red}(a) for a braid diagram of $\varepsilon^3\in B_{13}$.
Figure~\ref{fig:red}(c) illustrates a round reduction system for $\varepsilon^3$.
Figure~\ref{fig:red}(b) is another braid diagram for the 1-pure braid
of $[\varepsilon^3]^S$ in Figure~\ref{fig:e3SS}(a),
from which we can see that Figure~\ref{fig:red}(c) is also a reduction system
for this braid.
This happens for any 1-pure braid of $[\varepsilon^k]^{St}$
by Theorem~\ref{thm:main1} and Proposition~\ref{thm:St}.

\begin{corollary}\label{thm:redu}
If $d=\gcd(k,n-1)\geqslant 2$, then every 1-pure braid in $[\varepsilon^k]^{St}$
has a round reduction system,
consisting of $(n-1)/d$ circles each of which
encloses $d$ punctures, as in Figure~\ref{fig:red}(c).
\end{corollary}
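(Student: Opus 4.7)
The plan is to reduce to the case $k=d$ via Proposition~\ref{thm:St} and then to read the reduction system off directly from Theorem~\ref{thm:main1}.

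First I would verify that the bijection $f:[\varepsilon^d]^{St}\to[\varepsilon^k]^{St}$ of Proposition~\ref{thm:St} preserves 1-pureness: indeed $\pi_{\alpha^{k/d}}(1)=1$ whenever $\pi_\alpha(1)=1$, and conversely $\pi_{\delta^{np}\beta^q}(1)\equiv 1-np\equiv 1\pmod n$. Moreover, any family of disjoint simple closed curves preserved by $\alpha$ is automatically preserved by $\alpha^{k/d}=f(\alpha)$. Hence it suffices to exhibit the claimed round reduction system for every 1-pure $\alpha\in[\varepsilon^d]^S$.

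Next I would fix the geometric picture. In the model of Figure~\ref{fig:circ}(b), the braid $\varepsilon$ is a rigid rotation of $D_n$ by $2\pi/(n-1)$: puncture $1$ sits at the centre and the outer punctures are placed on a concentric circle in the cyclic order dictated by $\pi_\varepsilon$. A short unpacking of $\pi_\varepsilon(2)=n$ and $\pi_\varepsilon(i)=i-1$ for $i\geqslant 3$ shows that, in this cyclic order, each $S_k=\{kd+2,\ldots,kd+d+1\}$ appears as a block of $d$ consecutive outer punctures. Let $C_k$ be a small round circle in $D_n$ enclosing exactly $S_k$, and set $\mathcal{F}=\{C_0,\ldots,C_{r-1}\}$. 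Since $\varepsilon^d$ is rotation by $2\pi/r$, it cyclically permutes the $C_k$, so $\mathcal{F}$ is a round reduction system for $\varepsilon^d$ of the form required.

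For the main step, Theorem~\ref{thm:main1} lets me write each 1-pure $\alpha\in[\varepsilon^d]^S$ as $\alpha=c\varepsilon^d c^{-1}$, where
$$
c=\tau^{-d}(b_1 b_2\cdots b_{r-1})\tau^{-2d}(b_2\cdots b_{r-1})\cdots\tau^{-(r-1)d}(b_{r-1})
$$
is a product of factors of the form $\tau^{-kd}(b_j)$ with $1\leqslant k\leqslant j\leqslant r-1$. Since $b_j$ is supported on $S_j$, the factor $\tau^{-kd}(b_j)$ is supported on $\tau^{-kd}(S_j)=S_{j-k}$; the range $0\leqslant j-k\leqslant r-2$ means no cyclic wrap-around occurs. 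Now any braid supported on $S_\ell$ is realised by a mapping class of $D_n$ supported inside the disk bounded by $C_\ell$, and so preserves every circle of $\mathcal{F}$: setwise for $C_\ell$, pointwise for the others. Hence each factor of $c$ preserves $\mathcal{F}$, so $c(\mathcal{F})=\mathcal{F}$ and $\alpha=c\varepsilon^d c^{-1}$ preserves $\mathcal{F}$.

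The step I expect to require the most care is the geometric identification in the second paragraph: pinning down the cyclic order of the outer punctures in the natural embedding compatible with the dual Garside structure, and verifying that the sets $S_k$ sit as cyclically consecutive blocks that can be separated by pairwise disjoint round circles. Once this picture is in place, the algebraic conclusion is a direct consequence of Theorem~\ref{thm:main1} combined with the standard fact that a braid supported on a subset of strands is topologically supported inside a disk enclosing exactly those punctures.
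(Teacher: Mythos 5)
Your proposal is correct, and it rests on the same two external pillars as the paper --- Theorem~\ref{thm:main1} and Proposition~\ref{thm:St} --- but the central mechanism is genuinely different. The paper's justification (the paragraph preceding the corollary) reads the reduction system off the normal form itself: one draws the braid diagram of $\alpha=\delta^d a_{d+1,1}b_0b_1\cdots b_{r-1}$, as in Figure~\ref{fig:red}(b), and observes that the round circles enclosing the blocks $S_k$ are visibly preserved as tubes; the transfer from $\varepsilon^d$ to $\varepsilon^k$ is exactly your power/central-multiple argument via Proposition~\ref{thm:St}, including your (correct) checks that $f$ and $g$ preserve 1-pureness and that powers preserve reduction systems. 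You instead exploit the other half of Theorem~\ref{thm:main1}, the explicit conjugator $c$: since every factor $\tau^{-kd}(b_j)$ of $c$ is supported on $S_{j-k}$ with $0\leqslant j-k\leqslant r-2$ (so no wrap-around modulo $n$ occurs), $c$ preserves the family $\mathcal{F}$ of round circles, whence $\alpha=c\,\varepsilon^d c^{-1}$ preserves $\mathcal{F}$ because $\varepsilon^d$ does. This replaces the paper's diagrammatic inspection of an \emph{arbitrary} super summit element by an algebraic support computation, and it isolates all of the geometry in the single model statement that $\varepsilon^d$ preserves $\mathcal{F}$; the paper's route is more direct but relies on ``seeing'' the tubes in the figure for each $\alpha$.

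One point of caution, which you rightly flag as the delicate step: ``round'' is defined in the standard disk $D_n$ with punctures at $1,\ldots,n$, and the isotopy class of a simple closed curve is \emph{not} determined by the set of punctures it encloses. So the circles you construct in the rotation model of Figure~\ref{fig:circ}(b) must actually be identified, under the change of coordinates, with the standard round circles around the $S_\ell$ before your support argument for $c$ applies --- a braid generated by the $a_{ij}$ with $i,j\in S_\ell$ is realizable inside the \emph{standard} round disk around the consecutive punctures $S_\ell$, not inside an arbitrary curve enclosing those punctures. The cheapest way to close this is to verify directly on the standard diagram of $\varepsilon^d=\delta^d[d+1,\ldots,2,1]$ --- the paper's Figure~\ref{fig:red}(a) --- that the standard round circles are permuted; since that is precisely the level of rigor the paper itself employs for this corollary, your proof is no less complete than the original.
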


\begin{figure}
\begin{tabular}{ccccc}
\includegraphics[scale=.8]{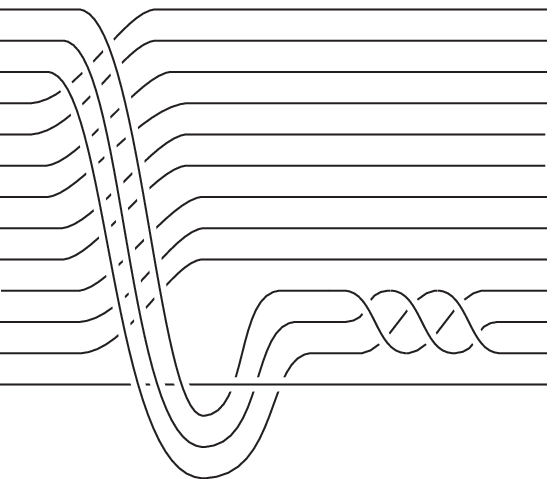}&\qquad&
\includegraphics[scale=.8]{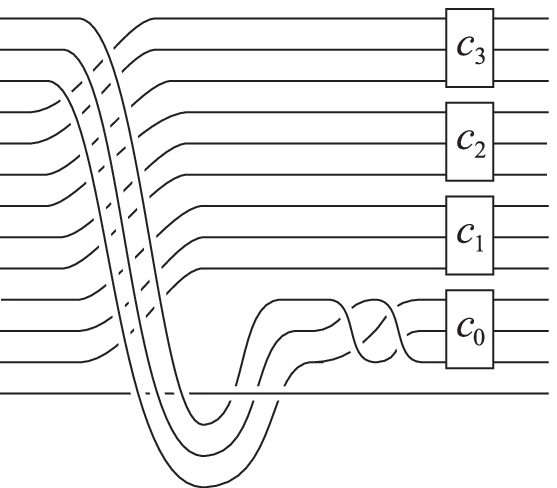}&\qquad&
\includegraphics[scale=.8]{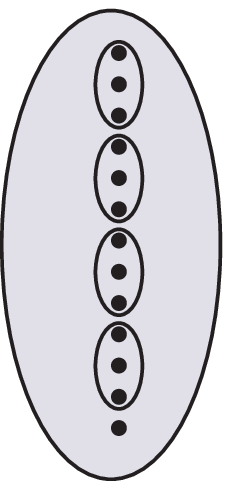}\\
(a) $\varepsilon^3\in B_{13}$  &&
(b) A 1-pure braid in $[\varepsilon^3]^S$ &&
(c) A reduction system
\end{tabular}
\caption{The braid diagrams (a) and (b)
show that 1-pure braids in $[\varepsilon^3]^S=[\varepsilon^3]^{St}$
are reducible with a \emph{round} reduction system shown in (c).}
\label{fig:red}
\end{figure}

\section*{Acknowledgements}
The authors are grateful to the anonymous referees for their careful reading and helpful comments.
The first author was partially supported by NRF-2015R1C1A2A01051589.
The second author was partially supported by NRF-2015R1D1A1A01056723.

\end{document}